\documentclass[11pt]{article}

\usepackage[T1]{fontenc}
\usepackage[utf8]{inputenc}
\usepackage[a4paper,margin=29mm]{geometry}
\usepackage{lmodern}
\usepackage{microtype}
\usepackage{amsmath,amssymb,amsthm,mathtools}
\usepackage{array,booktabs,enumitem}
\usepackage{xcolor}
\usepackage[colorlinks=true,linkcolor=blue!55!black,citecolor=blue!55!black,urlcolor=blue!55!black]{hyperref}

\setlist[itemize]{leftmargin=2em,itemsep=2pt,topsep=4pt}
\setlist[enumerate]{leftmargin=2.2em,itemsep=2pt,topsep=4pt}
\allowdisplaybreaks

\newtheorem{definition}{Definition}[section]
\newtheorem{proposition}[definition]{Proposition}
\newtheorem{lemma}[definition]{Lemma}
\newtheorem{theorem}[definition]{Theorem}
\newtheorem{corollary}[definition]{Corollary}

\newtheorem{remark}[definition]{Remark}

\newcommand{\EnK}{E_{\mathrm{nK}}}
\newcommand{\EnKtilde}{\widetilde E_{\mathrm{nK}}}
\newcommand{\EnKas}{E_{\mathrm{nK}}^{\mathrm{as}}}
\newcommand{\Eplus}{E_{+}}
\newcommand{\ddc}{dd^c}

\newcommand{\C}{\mathbb{C}}

\newcommand{\K}{\mathcal{K}}
\newcommand{\Pcone}{\mathcal{P}}

\DeclareMathOperator{\Null}{Null}
\DeclareMathOperator{\Sing}{Sing}
\DeclareMathOperator{\Reg}{Reg}
\DeclareMathOperator{\supp}{supp}

\newcommand{\dc}{d^c}

\title{Big and nef cohomology classes on compact K\"ahler spaces}
\author{Duc-Viet Vu}
\date{}

\begin{document}
\maketitle
\vspace{-1.2em}

\begin{abstract}
We first prove a version of the Demailly--P{\u{a}}un theorem for compact
weakly K\"ahler spaces.  Secondly, for a big and nef Bott--Chern class on a
compact normal K\"ahler space, we prove that the
restricted non--K\"ahler locus, defined using
K\"ahler currents with weak analytic singularities, is equal to the usual
non--K\"ahler locus.
\end{abstract}

\section{Introduction}

The aim of this work is to study big and nef cohomology classes on compact K\"ahler spaces. For smooth K\"ahler manifolds, Demailly--P{\u{a}}un \cite{DP04} proved a transcendental version of the Nakai--Moishezon characterization of the K\"ahler cone.  Our first goal is to establish a complete generalization of this fundamental result to compact K\"ahler spaces. There are already several versions of this theorem for singular spaces.

Collins--Tosatti proved a singular Demailly--P{\u{a}}un theorem for a compact analytic subvariety of a smooth K\"ahler manifold, with $(1,1)$-classes obtained from the ambient manifold \cite[Theorem~1.1]{CT16Singular}.  In their paper, they explicitly pointed to the extension of the Demailly--P{\u{a}}un theorem to general compact K\"ahler (reduced) and irreducible analytic spaces as a problem for future work
\cite[Introduction]{CT16Singular}. Das--Hacon--P{\u{a}}un  \cite[Theorem~2.36]{DHP24} generalized main results in \cite{DP04} to  normal compact K\"ahler spaces; see also \cite[Theorem~6.9]{BL22}. We refer to \cite{Buc99,Buc00,Chi16,Dan26,Pop16,Lam99} for further work for non-K\"ahler manifolds or varieties.
It should be noted that in the algebraic setting, the classical
algebraic statement does not require smoothness: Nakai treated
projective schemes, Moishezon treated complete algebraic varieties, and
Kleiman's argument applies to arbitrary proper schemes
\cite{Nak63,Moi64,Kle66}; see also the exposition \cite{Elk74}.  Thus the integral Nakai--Moishezon criterion was already known for singular, and even
nonreduced, algebraic schemes.   For real divisors on projective varieties, an
earlier extension was obtained by Campana--Peternell \cite{CP90}; see also
Lazarsfeld's treatment \cite[Theorem~2.3.18]{Laz04}.  Fujino--Miyamoto later
proved the Nakai--Moishezon criterion for real line bundles on arbitrary
complete schemes and complete algebraic spaces
\cite[Theorems~1.3 and~1.6]{FM23}.  For the
integral criterion on complete algebraic spaces, see also Koll\'ar and
Pascual--Gainza \cite{Kol90,PG85}. In order to state our first main result, we need to introduce some necessary notions. 

Let $X$ be a complex space 
and $C^\infty_{1,1}(X,\mathbb R)$ be the vector space of real smooth $(1,1)$-forms on $X$. In Section \ref{sec-premi},  we will define \emph{the extended Bott-Chern group $ \widetilde H_{\mathrm{BC}}^{1,1}(X,\mathbb R)$ of $X$}, which, unlike the usual Bott-Chern group $ H_{\mathrm{BC}}^{1,1}(X,\mathbb R)$, uses also closed forms which do not necessarily have local potentials. 
For  every
reduced complex space $X$, there is a natural
injective map
\begin{equation}
\label{eq:BC-into-extended}
    H_{\mathrm{BC}}^{1,1}(X,\mathbb R)
    \lhook\joinrel\longrightarrow
    \widetilde H_{\mathrm{BC}}^{1,1}(X,\mathbb R).
\end{equation}
The equality occurs if \(X\) is a complex manifold. 

Let $\widetilde{\K}$ denote the cone of classes in $\widetilde H_{\mathrm{BC}}^{1,1}(X,\mathbb R)$, which contain weakly K\"ahler forms, and let
$\widetilde{\Pcone}$ be the cone of classes having positive top
self-intersection on every positive-dimensional irreducible analytic subset. Let $\K$ be the usual Kähler cone in the Bott-Chern cohomology $ H_{\mathrm{BC}}^{1,1}(X,\mathbb R)$. Observe that 
$$\K \subset \widetilde{\K} \cap  H_{\mathrm{BC}}^{1,1}(X,\mathbb R).$$
We refer to Section \ref{sec-premi} for precise meanings of these notions as well as those used in Theorem \ref{thm:main-intro} below. Here is our first main result. 

\begin{theorem} \label{thm:main-intro}
Let $X$ be a compact weakly K\"ahler space. 
Then the cone
$\widetilde{\K}$ is a connected component of
$\widetilde{\Pcone}$.  If, moreover, $X$ is K\"ahler, then inside
Bott--Chern cohomology the K\"ahler cone $\K$ is a connected component of
the corresponding numerical positive cone $\Pcone$.
\end{theorem}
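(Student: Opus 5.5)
The plan is to follow the Demailly--P{\u{a}}un strategy by induction on $\dim X$, working throughout with the extended Bott--Chern group $\widetilde H_{\mathrm{BC}}^{1,1}(X,\mathbb R)$. This group is convenient because closed forms there need not have local potentials. As a result, restrictions to subvarieties and pullbacks to desingularizations behave well, and positivity can be tested on resolutions.

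\textbf{Openness and closedness.} First observe that $\widetilde{\K}$ is open (weakly K\"ahler is an open condition once a fixed weakly K\"ahler form $\omega$ is available) and convex, hence connected. Since $\widetilde{\K}\subset\widetilde{\Pcone}$ (a weakly K\"ahler form integrates positively over every positive-dimensional irreducible subset), it suffices to show that $\widetilde{\K}$ is closed in $\widetilde{\Pcone}$. Take $\alpha$ in $\overline{\widetilde{\K}}\cap\widetilde{\Pcone}$; then $\alpha$ is nef and satisfies $\int_Z\alpha^{\dim Z}>0$ for every irreducible $Z$. We must show $\alpha\in\widetilde{\K}$. By induction on dimension, applied to each irreducible component of $\Sing X$ and to every proper subvariety, the restriction $\alpha|_Z$ is weakly K\"ahler on each proper irreducible analytic subset $Z$.

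\textbf{Step 1: a K\"ahler current in $\alpha$.} Pass to a resolution $\pi:\widehat X\to X$ that is a composition of blow-ups with smooth centers. On $\widehat X$ the class $\pi^*\alpha$ is nef with $\int\pi^*\alpha^n>0$. Demailly--P{\u{a}}un's mass concentration on the compact K\"ahler manifold $\widehat X$ (K\"ahler since $X$ is weakly K\"ahler and $\pi$ is projective) then yields a K\"ahler current modulo the exceptional divisor. Pushing forward gives a closed positive current $T\in\alpha$ with $T\ge\varepsilon\omega$ on $X$. If the weak K\"ahler form does not pull back to a K\"ahler form on $\widehat X$, replace it by $\pi^*\omega-\delta\theta_E$, where $\theta_E$ is the curvature of the exceptional divisor; this is the standard device. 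We may further assume $T$ has analytic singularities by Demailly regularization, performed on $\widehat X$ and pushed forward.

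\textbf{Step 2: removing singularities.} The singular locus of $T$ is contained in a proper analytic set $S$. By induction, $\alpha|_{S_j}$ is weakly K\"ahler for each component $S_j$ of $S$. Next apply a gluing lemma in the style of Collins--Tosatti: from a K\"ahler current with analytic singularities along $S$, together with K\"ahler forms in $\alpha|_{S}$, produce a smooth weakly K\"ahler form in $\alpha$. This is done by regularized maximum of the current potential with local extensions of the forms on $S$, patched using an exhaustion by neighborhoods of strata of $S$, handling lower-dimensional strata first. \textbf{The main obstacle} is exactly this extension step on a singular ambient space. It needs local extension of K\"ahler potentials from the subvariety $S$ to a neighborhood in $X$, which is available because weakly K\"ahler forms are allowed to have no local potentials. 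It also needs control of the induction across singular points of $S$ and of $X$. I would try to carry it out locally through embeddings $X\hookrightarrow\C^N$ and the standard argument that strictly psh functions on a subvariety extend to strictly psh functions nearby.

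\textbf{Bott--Chern case.} For the second statement, assume $X$ is K\"ahler. The same argument runs inside $H_{\mathrm{BC}}^{1,1}$: the injection \eqref{eq:BC-into-extended} identifies $\Pcone$ with $\widetilde{\Pcone}\cap H_{\mathrm{BC}}^{1,1}$. One then checks that a class in $H_{\mathrm{BC}}^{1,1}$ containing a weakly K\"ahler form also contains a genuine K\"ahler form. For this, add $dd^c$ of a potential correction: the difference has local potentials, so by a Richberg-type smoothing the constructed form can be taken with local potentials. It follows that $\K=\widetilde{\K}\cap H_{\mathrm{BC}}^{1,1}$ is a connected component of $\Pcone$.
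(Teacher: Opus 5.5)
The weakly K\"ahler part follows the paper's proof. It proves closedness of $\widetilde{\K}$ in $\widetilde{\Pcone}$ by induction on dimension, using Demailly--P\u{a}un mass concentration on a resolution, extension from the lower-dimensional bad set, and regularized-max gluing. The extension step is Lemma~\ref{le-extension-DP}. It is proved by stratifying $A$ and adding $\epsilon^3\log(1+\epsilon^{-4}|z'|^2)$ in coordinates normal to each stratum inside local embeddings. It has nothing to do with the absence of local potentials.

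Three points in your gluing need repair.
\begin{enumerate}
\item The set $A$ along which you glue must contain $\Sing X$ and $\pi(\operatorname{Exc}\pi)$, not just the polar set of the Demailly--P\u{a}un current. The potential $u$ must also tend to $-\infty$ along all of $A$. Otherwise, near points of $A$ where $u$ stays bounded, $w=v+\chi_\delta(u-v)$ involves $u$, which there is neither smooth on $X$ nor known to give an ambient Hermitian form. The paper forces this by adding to $\widetilde u$ a small multiple of Demailly's function with logarithmic poles along $B\cup\operatorname{Exc}(\pi)\cup\pi^{-1}(\Sing X)$ (Lemma~\ref{le-Demailly-exten.lemma}).
\item The paper uses $u$ only on $X\setminus A$. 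On a non-normal $X$ the pushed-forward current need not have a global quasi-psh potential, since $\widetilde u$ need not be constant on disconnected fibres. So your ``$T\in\alpha$'' is neither justified nor needed.
\item Lemma~\ref{le-extension-DP} needs one smooth $\varphi_A$ on all of $A$. So the induction hypothesis must be applied to the reducible, possibly non-pure-dimensional space $A$ itself, not to its components separately.
\end{enumerate}
These are fixable.

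The genuine gap is the Bott--Chern part. If $\theta$ has smooth local potentials, every weakly K\"ahler form $\theta+\ddc\varphi$ automatically has them too. So a ``Richberg-type smoothing so that the form has local potentials'' proves nothing. What must be shown is K\"ahlerness: near each point the form must be the restriction of a \emph{closed} strictly positive ambient form. Equivalently, a local potential must be the restriction of a smooth strictly psh ambient function. At singular points, positivity on $X$ does not control the ambient Levi form of an extension of the potential. Remark~\ref{re-subtlety}(ii) records exactly this: a smooth K\"ahler current with smooth local potentials need not be a K\"ahler form. Richberg smoothing improves regularity, not ambient positivity, and regularity is not the issue.

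The paper avoids this by running Proposition~\ref{prop:ambient-induction} with $\theta$ having local potentials from the start. The induction gives a K\"ahler form on $A$. The K\"ahler version of Lemma~\ref{le-extension-DP}, with the ambient extension $\theta'$ chosen closed, gives a K\"ahler form near $A$. On the manifold $X\setminus A$ positivity already means K\"ahler. This yields $\overline{\K}\cap\Pcone=\K$ directly, and $\K=\widetilde{\K}\cap H_{\mathrm{BC}}^{1,1}(X,\mathbb R)$ is obtained only afterwards (Corollary~\ref{cor-main-equalitycone}).

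Your reduction can be rescued, but only with that same tool. Apply the K\"ahler version of Lemma~\ref{le-extension-DP} with $A=X$: it turns a weakly K\"ahler form in a Bott--Chern class into a K\"ahler form in the same class. After that, your open--closed argument inside $\Pcone$ goes through.
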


In particular, no normality or global ambient embedding is required for $X$. The proof of Theorem \ref{thm:main-intro} is based essentially on arguments from \cite{DP04,Pau98}.

Consider now $X$ of pure dimension. We define now another version of positive cones. Let $\widetilde{\K}'$ be the cone of classes in $\widetilde H_{\mathrm{BC}}^{1,1}(X,\mathbb R)$, which contain weakly Kähler forms having local $\ddc$-potentials.  Let $\widetilde{\K}''$ be the cone of classes in $\widetilde H_{\mathrm{BC}}^{1,1}(X,\mathbb R)$, which contain smooth Kähler $(1,1)$-currents on $X$. One has
$$\widetilde{\K}' \subset \widetilde{\K} \subset \widetilde{\K}''.$$
 Observe that the cone of nef classes in $\widetilde H_{\mathrm{BC}}^{1,1}(X,\mathbb R)$ is equal to the closure $\overline{\widetilde{\K}''}$ of $\widetilde{\K}''$ in $\widetilde H_{\mathrm{BC}}^{1,1}(X,\mathbb R)$ (see Remark \ref{re-subtlety} below for more information).

As a consequence of Theorem \ref{thm:main-intro}, we obtain the following equality between different ``Kähler'' cones.

\begin{corollary} \label{cor-main-equalitycone} Let $X$ be a compact weakly K\"ahler space  of pure dimension. Then we have
\begin{align}\label{eq-conebangnhau}
\widetilde{\K} =\widetilde{\K}''.
\end{align}
In particular, nef cohomology classes in  $\widetilde H_{\mathrm{BC}}^{1,1}(X,\mathbb R)$ can be approximated by weakly Kähler classes. Moreover, one has 
\begin{align} \label{eq-conebangnhauBC}
\K = \widetilde{\K}' \cap     H_{\mathrm{BC}}^{1,1}(X,\mathbb R)= \widetilde{\K}'' \cap     H_{\mathrm{BC}}^{1,1}(X,\mathbb R) .
\end{align}
In particular,  if $X$ is Kähler, then every nef cohomology class in  $H_{\mathrm{BC}}^{1,1}(X,\mathbb R)$ can be approximated by Kähler classes.
\end{corollary}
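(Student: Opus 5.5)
The plan is to deduce both equalities from Theorem \ref{thm:main-intro} by a connectedness argument.

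\textbf{The equality $\widetilde{\K}=\widetilde{\K}''$.} The inclusion $\widetilde{\K}\subset\widetilde{\K}''$ is already noted. For the converse, let $\alpha\in\widetilde{\K}''$ be represented by a smooth Kähler current $T$, and fix a weakly Kähler class $\omega\in\widetilde{\K}$, which exists because $X$ is weakly Kähler.

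First, every class in $\widetilde{\K}''$ lies in $\widetilde{\Pcone}$. Let $V\subset X$ be an irreducible analytic subset of dimension $p>0$. Then $T|_{V}$ is a smooth form that dominates $\varepsilon\omega_0$ for some hermitian form $\omega_0$. Hence $\int_V T^{p}>0$. Here one must check that the intersection number $\alpha^p\cdot V$ used in defining $\widetilde{\Pcone}$ is computed by integrating a smooth representative over $\Reg V$. Since the representatives in the extended group are closed smooth forms, this should be built into the definitions of Section \ref{sec-premi}.

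Next, $\widetilde{\K}''$ is convex, since sums and positive multiples of Kähler currents are Kähler currents. So the segment $\alpha_t=(1-t)\alpha+t\omega$, $t\in[0,1]$, lies in $\widetilde{\K}''\subset\widetilde{\Pcone}$. This segment is connected and ends at $\omega\in\widetilde{\K}$. Since $\widetilde{\K}$ is a connected component of $\widetilde{\Pcone}$ by Theorem \ref{thm:main-intro}, we get $\alpha\in\widetilde{\K}$.

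The nef statement then follows from the observation preceding the corollary that the nef cone equals $\overline{\widetilde{\K}''}=\overline{\widetilde{\K}}$.

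\textbf{The equalities in \eqref{eq-conebangnhauBC}.} We have the chain
\[
\K\subset\widetilde{\K}'\cap H_{\mathrm{BC}}^{1,1}\subset\widetilde{\K}''\cap H_{\mathrm{BC}}^{1,1}.
\]
The first inclusion holds because a Kähler form has local potentials, so it is a weakly Kähler form with local $\ddc$-potentials. The second holds because $\widetilde{\K}'\subset\widetilde{\K}\subset\widetilde{\K}''$.

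It remains to show $\widetilde{\K}''\cap H_{\mathrm{BC}}^{1,1}\subset\K$. Take $\alpha$ in this intersection. If $\K$ is empty, I would first argue that $\widetilde{\K}''\cap H_{\mathrm{BC}}^{1,1}$ is empty too. Indeed, a class in $H_{\mathrm{BC}}^{1,1}$ carrying a smooth Kähler current $T$ has a representative with local potentials, and $T$ differs from it by $\ddc\varphi$ with $\varphi$ smooth. Then $T$ itself has local potentials, so $T$ is a genuine Kähler form and $\alpha\in\K$.

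This direct argument in fact proves the inclusion in general, without any connectedness. The key point is the Bott--Chern relation: two representatives in $\widetilde H_{\mathrm{BC}}^{1,1}$ differ by $\ddc$ of a smooth function, as in Section \ref{sec-premi}. Granting this, $T=\theta+\ddc\varphi$ with $\theta$ having local potentials, and $T$ is smooth, positive, and locally $\ddc$-exact. It would remain to verify that a smooth strictly positive form with local smooth potentials is a Kähler form in the sense of the paper, i.e. that its local potentials are strictly psh. I would expect this to follow from the definition by checking strict positivity in local embeddings.

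Finally, if $X$ is Kähler, let $\beta$ be a nef Bott--Chern class and $\omega$ a Kähler class. Then $\beta+\varepsilon\omega$ is represented by a smooth Kähler current for every $\varepsilon>0$. By the previous paragraph it lies in $\K$, which gives the approximation $\beta=\lim_{\varepsilon\to0}(\beta+\varepsilon\omega)$.

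\textbf{Main obstacle.} The step I expect to be hardest is showing that smooth Kähler currents on singular $X$ have positive top self-intersection on every subvariety, i.e. that $\widetilde{\K}''\subset\widetilde{\Pcone}$. This depends on the precise definitions of smooth forms on singular spaces and of the intersection pairing in Section \ref{sec-premi}.
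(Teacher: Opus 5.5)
Your proposal has a genuine gap in the Bott--Chern part. It also leaves the key step of the first part unjustified.

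\textbf{Bott--Chern part.} Your direct argument for $\widetilde{\K}''\cap H_{\mathrm{BC}}^{1,1}(X,\mathbb R)\subset\K$ fails at exactly the step you left open, so it does not work ``without any connectedness.''

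Writing $T=\theta+\ddc\varphi$ does show that $T$ has smooth local potentials. But $T\geq\delta\omega$ holds only as currents, and on a singular space that gives pointwise information only on $\Reg(X)$. A K\"ahler form must be locally the restriction of a strictly positive closed ambient form, and nothing controls the ambient directions at singular points. This is the content of Remark~\ref{re-subtlety}(ii).

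Here is a concrete local example. Take the cusp $X=\{y^2=x^3\}\subset\C^2$ with $\nu(t)=(t^2,t^3)$, and put $u:=(|x|^2-|y|^2)|_X$.
\begin{itemize}
\item One computes $\nu^*(i\partial\bar\partial u)=(4|t|^2-9|t|^4)\,i\,dt\wedge d\bar t$.
\item This dominates $\tfrac12\nu^*(i\,dx\wedge d\bar x+i\,dy\wedge d\bar y)=\tfrac12(4|t|^2+9|t|^4)\,i\,dt\wedge d\bar t$ near $t=0$. So $i\partial\bar\partial u$ is a smooth K\"ahler current with smooth potential $u$.
\item Suppose a smooth ambient form $\sum\eta_{j\bar k}\,i\,dz_j\wedge d\bar z_k$ restricts to it. By the first lemma of Section~\ref{sec-premi}, it suffices to test this embedding.
\item The Taylor series of $\eta_{j\bar k}(t^2,t^3)$ contains no monomial $t^a\bar t^b$ with $a=1$ or $b=1$. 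Hence the coefficient of $|t|^4$ in the pullback is $9\eta_{2\bar2}(0)$.
\item Matching coefficients forces $\eta_{2\bar2}(0)=-1$, so the form is not even Hermitian.
\end{itemize}

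The missing idea is to give up the representative $T$. The paper first uses $\widetilde{\K}''=\widetilde{\K}$ to see that $\alpha$ is weakly K\"ahler. Hence $\alpha$ is strongly nef with $\int_Y\alpha^{\dim Y}>0$ for all $Y$. The local-potential case of Proposition~\ref{prop:ambient-induction}, via the gluing of Lemma~\ref{le-extension-DP}, then produces a \emph{different} representative that is a genuine K\"ahler form. Your approximation statement for K\"ahler $X$ rests on this inclusion, and it goes through once the inclusion is repaired this way.

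\textbf{First equality.} Your segment argument is sound: convexity of $\widetilde{\K}''$ together with $\widetilde{\K}$ being a connected component of $\widetilde{\Pcone}$. It is a little more direct than the paper's route, which proves $\int_Y\alpha^k\wedge\omega^{p-k}>0$ for all $k$ and applies Corollary~\ref{prop:mixed} along the ray $\alpha+t\omega$.

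However, saying that $T|_V$ dominates $\varepsilon\omega_0$ does not justify $\widetilde{\K}''\subset\widetilde{\Pcone}$. When $V\subset\Sing(X)$, you only know that $T-\delta\omega$ is semipositive on $\Reg(X)$, for the same reason as above. The real obstacle is this gap between current positivity and pointwise positivity, not the definition of intersection numbers.

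The gap is closed by Lemma~\ref{le-Smsoothpositiveascurrents}. On a resolution $\pi$, the smooth form $\pi^*(T-\delta\omega)$ is semipositive by continuity from the dense set $\pi^{-1}(\Reg X)$. Integrating over a component of $\pi^{-1}(V)$ dominating $V$ gives $\int_V(T-\delta\omega)^j\wedge\omega^{p-j}\geq0$. Hence
\[
\int_V\alpha^p=\sum_{j=0}^p\binom pj\delta^{p-j}\int_V(T-\delta\omega)^j\wedge\omega^{p-j}\geq\delta^p\int_V\omega^p>0.
\]
With this inserted, your proof of $\widetilde{\K}=\widetilde{\K}''$ and of the nef approximation is complete.
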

One should note that in the smooth setting, Corollary \ref{cor-main-equalitycone} is trivial. By contrast, in the singular setting, we have to use Theorem \ref{thm:main-intro} to obtain it. We don't have an easier way to prove Corollary \ref{cor-main-equalitycone}.

As in \cite{DP04}, we obtain the following direct consequence of Theorem \ref{thm:main-intro} and Corollary \ref{cor-main-equalitycone}.

\begin{theorem} \label{thm-projective} Let $X$ be a projective variety of pure dimension. Then we have 
$$\widetilde{\K}= \widetilde{\Pcone},\quad  \K = \Pcone.$$
Furthermore, the following assertions hold:

(i) a class $\alpha \in H_{\mathrm{BC}}^{1,1}(X,\mathbb R)$ is Kähler if and only if $\int_Y \alpha^p>0$ for every positive-dimensional irreducible $Y\subset X$ ($p=\dim Y$).  

(ii) a class $\alpha \in H_{\mathrm{BC}}^{1,1}(X,\mathbb R)$ is nef if and only if $\int_Y \alpha^p \ge 0$ for every positive-dimensional irreducible $Y\subset X$ ($p=\dim Y$).     
\end{theorem}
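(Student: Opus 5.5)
The argument follows Demailly--P\u{a}un: the projective case reduces to showing that the cone $\widetilde{\Pcone}$ is connected and meets $\widetilde{\K}$. By Theorem~\ref{thm:main-intro}, $\widetilde{\K}$ is a connected component of $\widetilde{\Pcone}$ (and $\K$ of $\Pcone$), so once we know $\widetilde{\Pcone}$ is connected we get $\widetilde{\K}=\widetilde{\Pcone}$, and likewise $\K=\Pcone$.

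Connectedness will come from an ample class. Since $X$ is projective, it embeds in some $\mathbb P^N$; let $\omega$ be the restriction of the Fubini--Study form and $h=\{\omega\}\in\K$. For $\alpha\in\widetilde{\Pcone}$ I consider the segment $\alpha_t=\alpha+th$, $t\ge 0$. For every irreducible $Y$ of dimension $p>0$,
\[
\int_Y \alpha_t^p=\sum_{k=0}^{p}\binom pk t^{p-k}\int_Y \alpha^k\wedge h^{p-k}.
\]
The key claim is that every mixed term $\int_Y\alpha^k\wedge h^{p-k}$ is positive. Represent $h^{p-k}$ restricted to $Y$ by the current of integration over a general complete intersection $Z=Y\cap H_1\cap\dots\cap H_{p-k}$ of hyperplane sections. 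Then $Z$ is a $k$-dimensional analytic set, and each of its irreducible components $Z_j$ satisfies $\int_{Z_j}\alpha^k>0$ because $\alpha\in\widetilde{\Pcone}$. Summing, the mixed term is positive. Hence $\alpha_t\in\widetilde{\Pcone}$ for all $t\ge0$, and the ray $\{\alpha_t\}$ escapes to infinity along $h$.

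Next I join the far end of the ray to $h$. For $t$ large, the segment from $\alpha_t$ to $th$ consists of classes $th+s\alpha$ with $s\in[0,1]$. By the same expansion and positivity of the mixed terms, these classes have positive self-intersection on every $Y$ once $t\gg0$, and only finitely many intersection numbers are involved. Since $th\in\K$ for $t>0$, the cone $\widetilde{\Pcone}$ is path connected. The same argument inside $H^{1,1}_{\mathrm{BC}}$ gives connectedness of $\Pcone$.

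I expect the main obstacle to be justifying that $\int_Y\alpha^k\wedge h^{p-k}$ equals the sum of the $\int_{Z_j}\alpha^k$ for a class in the extended group $\widetilde H^{1,1}_{\mathrm{BC}}$ on a singular $Y$. This needs the following:
\begin{itemize}
\item $\int_Y\alpha^k\wedge[\omega]$ equals $\int_{Y\cap H}\alpha^k$ for a general hyperplane $H$, obtained by Poincar\'e--Lelong $\omega-[H]=\ddc\log(\cdot)$ and Stokes on $Y$ (via a resolution if needed), using that $\alpha$ is represented by a smooth closed form;
\item the fact that general $H$ meets $Y$ properly with multiplicity one.
\end{itemize}
I would first handle the case $k=p-1$ and then induct on $p-k$.

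Items (i) and (ii) then follow. For (i): $\alpha$ is K\"ahler iff $\alpha\in\K=\Pcone$. For (ii): if all $\int_Y\alpha^p\ge0$, then the positivity of the mixed terms shows $\alpha+\varepsilon h\in\Pcone=\K$ for all $\varepsilon>0$, so $\alpha$ is nef. Conversely, nef classes are limits of K\"ahler classes by Corollary~\ref{cor-main-equalitycone}, so they satisfy the non-strict inequalities by continuity.
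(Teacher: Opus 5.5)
Your proposal is correct and is essentially the paper's proof. In both, the hyperplane-section identity gives positivity of every mixed term $\int_Y\alpha^k\wedge h^{p-k}$, and the ray $\alpha+th$ then joins $\alpha$ to Kähler classes inside $\widetilde{\Pcone}$; with Theorem~\ref{thm:main-intro}, your connectedness phrasing is the argument of Corollary~\ref{prop:mixed} that the paper invokes, and your treatment of (ii) via $\alpha+\varepsilon h$ is the paper's Corollary~\ref{cor:03} argument done by hand. The only slip is the remark that ``only finitely many intersection numbers are involved'', which is false since there are infinitely many $Y$, but it is harmless: positivity of all mixed terms already gives $\int_Y(th+s\alpha)^p>0$ for every $t>0$ and $s\ge 0$.
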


For a big and nef Bott--Chern class $\alpha$ on a  compact K\"ahler
space of pure dimension, denote by $\EnK(\alpha)$  the intersection of the
positive Lelong loci of K\"ahler currents in $\alpha$ which have local
potentials. Let $\EnKas(\alpha)$ denote the same intersection restricted to K\"ahler
currents with weak analytic singularities, and let $\Null(\alpha)$ be the null
locus of $\alpha$. We refer to Section \ref{sec-nonkahlerlocus} for precise definitions. When $X$ is smooth, then by Demailly's analytic approximation of psh functions, we get
\begin{align}\label{eq-EasEnonkahlerlocus}
\EnK(\alpha)=\EnKas(\alpha).
\end{align}
For $X$ smooth, Collins--Tosatti \cite{CT15} proved
\begin{align}\label{eq-nulllocus}
\EnKas(\alpha)=\Null(\alpha)
\end{align}
This extends previous results for integral big and nef classes
\cite{Nak00,ELMNP09}.
For integral classes, the algebraic counterpart of (\ref{eq-nulllocus}) takes the form
$$\mathbf B_+(L)=\operatorname{Null}(L),$$
where $L$ is a big and nef line bundle on a smooth projective manifold. An extension of the last equality to log-canonical varieties was obtained in \cite{CL14}. The general result for nef $\mathbb R$-Cartier divisors on arbitrary projective schemes is due to \cite{Bir17}. We refer to  \cite{BCL14,Lop15} for related information.

Recently, Hacon--P{\u{a}}un established (\ref{eq-nulllocus})
for normal compact K\"ahler spaces and expected (\ref{eq-EasEnonkahlerlocus}) holds in this setting, see \cite[Theorem~4.21 and Remark~4.19]{HPaun24}. Our second main result (Theorem \ref{thm:restricted-non-kahler-intro}) confirms this expectation.

\begin{theorem}\label{thm:restricted-non-kahler-intro}
Let $X$ be a compact normal K\"ahler space, and let
$\alpha\in H_{\mathrm{BC}}^{1,1}(X,\mathbb R)$ be big and nef.  Then
\[
\EnK(\alpha) =\EnKas(\alpha)=\Null(\alpha).
\]
\end{theorem}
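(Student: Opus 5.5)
We argue by a chain of inclusions. The inclusion $\EnK(\alpha)\subset\EnKas(\alpha)$ is formal: $\EnKas(\alpha)$ is an intersection over a smaller family of currents, namely Kähler currents in $\alpha$ with weak analytic singularities, all of which have local potentials. The equality $\EnKas(\alpha)=\Null(\alpha)$ is the theorem of Hacon--P{\u{a}}un \cite[Theorem~4.21]{HPaun24} for normal compact Kähler spaces, which we take as given. It therefore suffices to prove $\Null(\alpha)\subset\EnK(\alpha)$. Equivalently, if $T\in\alpha$ is any Kähler current with local potentials and $V$ is an irreducible positive-dimensional analytic subset with $\int_V\alpha^{\dim V}=0$, then $V$ must lie in the locus where $T$ has positive Lelong number.

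We argue by contradiction. Suppose $V\not\subset E_+(T)$, so that a very general point of $V$ has zero Lelong number for $T$. Write $T\ge\omega$ for a Kähler form $\omega$ (after shrinking). First pass to a resolution $\pi:\widehat X\to X$ that also resolves $V$, or at least take the strict transform $\widehat V$ and a desingularization $\nu:\widetilde V\to V$. The pullback $\nu^*T$ is a well-defined closed positive current on $\widetilde V$ because $T$ has local potentials not identically $-\infty$ on $V$. It lies in the class $\nu^*\alpha|_V$, which is nef, and it dominates $\nu^*\omega$, which is semipositive and strictly positive on a Zariski open set. Hence $\nu^*\alpha|_V$ is big on $\widetilde V$. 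Since $\nu^*\alpha|_V$ is nef and big on a compact Kähler manifold, its volume equals $\int_{\widetilde V}(\nu^*\alpha)^{p}=\int_V\alpha^{p}$ by Boucksom's characterization of the volume of nef classes. This volume is positive, which contradicts $\int_V\alpha^p=0$.

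The main obstacle is justifying that $\nu^*T$ is a Kähler current on $\widetilde V$ in the required sense, namely that it dominates a big class. If $V\not\subset E_+(T)$, the local potential of $T$ is not $\equiv-\infty$ on $V$, so the restriction exists. However, $\nu^*\omega$ is only semipositive and degenerates on the exceptional locus. We handle this by writing $\nu^*\omega\ge \varepsilon\widetilde\omega-[\text{effective divisor}]$ in cohomology, or more directly by noting that $\{\nu^*\omega\}$ is nef with $\int(\nu^*\omega)^p=\int_V\omega^p>0$, so it is big. Then $\{\nu^*T\}=\{\nu^*\omega\}+\{\nu^*(T-\omega)\}$ is big plus pseudoeffective, hence big. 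Normality of $X$ enters through the comparison of Bott--Chern classes with pullbacks and through Hacon--P{\u{a}}un's equality. If the restriction of potentials to $V$ causes trouble, because $V$ may meet the polar set, we would instead regularize $T$ via Demailly's approximation on the resolution $\widehat X$. Lelong numbers along $\widehat V$ are preserved by approximation, which yields a current with analytic singularities whose singular locus does not contain $\widehat V$. That current descends to a current with weak analytic singularities on $X$, showing directly that $V\not\subset\EnKas(\alpha)$.
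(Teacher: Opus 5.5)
Your chain of inclusions is the paper's, so everything rests on $\Null(\alpha)\subset\EnK(\alpha)$. There the argument breaks at the restriction step, so this is a genuine gap.

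You assert that $V\not\subset\Eplus(T)$ forces the local potentials of $T$ to be $\not\equiv-\infty$ on $V$, so that $\nu^*T$ exists on $\widetilde V$. This is false: vanishing Lelong numbers do not keep $V$ out of the polar set. For example, $-\sqrt{-\log|w|}$ is psh near $\{w=0\}\subset\C^2$ (it is a convex increasing function of $\log|w|$). It is $\equiv-\infty$ on $\{w=0\}$, yet its Lelong number is $0$ everywhere. This globalizes: on $\mathbb P^2$ with a line $L=\{s=0\}$, the current $\omega_{\mathrm{FS}}+\tfrac12\ddc\chi(\log|s|_h^2)$ with $\chi(t)=-\sqrt{1+C-t}$ is a K\"ahler current with local potentials and $\Eplus=\varnothing$, but it cannot be restricted to $L$. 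So the current that was supposed to make $\nu^*(\alpha|_V)$ big need not exist, and the volume argument has nothing to act on.

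Your fallback does not repair this, for three reasons.

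First, you need the generic Lelong number of $\pi^*T$ along a component $F$ of $\pi^{-1}(V)$ dominating $V$ to vanish. In the case $V\subset\Sing(X)$, the only case not already covered by $\EnKas(\alpha)\subset\EnK(\alpha)\cup\Sing(X)$, there is no strict transform and $F$ is exceptional. The paper needs a finite projection, Demailly's direct-image estimate for the trace, and Favre's comparison for pullbacks to prove $\nu(\pi^*T,F)=0$.

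Second, Demailly regularization costs positivity. The approximants lie in $\pi^*\alpha+\varepsilon_j\{\Omega\}$, not in $\pi^*\alpha$, and $\pi^*T$ only dominates the degenerate form $\delta\pi^*\omega$. The natural way to absorb $-\varepsilon_j\Omega$ is to add a positive current in $\{\delta\pi^*\omega-\varepsilon_j\Omega\}$, which amounts to a K\"ahler current in $\{\delta\pi^*\omega\}$. When $V\subset\Sing(X)$, normality (Zariski's main theorem) gives $F\subset\Null(\pi^*\omega)$, which equals $\EnK(\pi^*\{\omega\})$ by Collins--Tosatti on $X'$. Hence every such current is singular along $F$.

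Third, even granting a suitable current upstairs, its push-forward can pick up Lelong numbers along $V$ from other parts of $\pi^{-1}(V)$. So the claim that it ``descends'' to show $V\not\subset\EnKas(\alpha)$ is essentially the inclusion $\EnKas\subset\EnK$ the theorem is about.

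The missing idea is to avoid producing any current in $\alpha$: keep the $\varepsilon_j$ error and pass to a limit. The paper proceeds as follows:
\begin{itemize}
\item It regularizes $\pi^*T-\delta\pi^*\omega$ to $S_j\geq-\varepsilon_j\Omega$ with analytic singularities whose polar sets miss the generic point of $F$. For analytic singularities, the polar set is exactly the locus of positive Lelong numbers.
\item It pulls $T_j:=S_j+\delta\pi^*\omega+\varepsilon_j\Omega$ back to a resolution $g\colon\widehat F\to F$ and compares non-pluripolar masses against $(g^*\Omega)^r$, where $r=\dim F-k$.
\item Since $T_j\geq\delta\pi^*\omega$, this mass has the $j$-independent lower bound $\delta^k c\int_V\omega^k>0$, where $\pi_*([F]\wedge\Omega^r)=c[V]$.
\item The upper bound $\int_F(\pi^*\alpha+\varepsilon_j\Omega)^k\wedge\Omega^r$ tends to $c\int_V\alpha^k=0$, which gives the contradiction.
\end{itemize}
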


Basic facts about psh functions on complex spaces will be recalled in Section \ref{sec-premi}. Our first main result and its consequence are proved in Section \ref{sec-Kählercone}. Theorem \ref{thm:restricted-non-kahler-intro} will be proved in Section \ref{sec-nonkahlerlocus}.

\medskip
\noindent\textbf{Acknowledgement.} We thank Duc-Bao Nguyen for many fruitful
discussions. The research of D.-V. Vu is partially funded by the DFG-Project
VU 126/1-2.

\section{Preliminaries} \label{sec-premi}

Let $X$ be a (reduced) complex space and $C^\infty_{1,1}(X,\mathbb R)$ be the vector space of real smooth $(1,1)$-forms on $X$. We refer to \cite{Dem85} for the definitions of smooth forms, psh functions, and their basic properties. 
We define
\begin{equation}
\label{eq:extended-ddbar-group}
    \widetilde H_{\mathrm{BC}}^{1,1}(X,\mathbb R)
    :=
    \frac{
        \left\{
        \alpha\in C^\infty_{1,1}(X,\mathbb R)
        \;\middle|\;
        d\alpha=0,
        \right\}
    }{
        \sqrt{-1}\partial\bar\partial
        C^\infty(X,\mathbb R)
    }.
\end{equation}
Here the tilde is introduced to distinguish this quotient from the
Bott--Chern group defined using forms with local potentials. The advantage of $\widetilde H_{\mathrm{BC}}^{1,1}(X,\mathbb R)$ is that it allows us to treat smooth closed forms that do not necessarily have local $\ddc$-potentials.  Thus every smooth \(d\)-closed real \((1,1)\)-form determines a class
\[
    \{\alpha\}
    \in
    \widetilde H_{\mathrm{BC}}^{1,1}(X,\mathbb R),
\]
whether or not \(\alpha\) has smooth local
\(\sqrt{-1}\partial\bar\partial\)-potentials.

We endow the numerator of \eqref{eq:extended-ddbar-group} with its natural
Fr\'echet topology and the quotient with the quotient topology. It seems that  this topology need not be Hausdorff in general.  All closures and connected components
below are taken in this topology.  The same convention is used for the
Bott--Chern group.  We recall that the Bott--Chern group of $X$ is defined as follows:
\begin{equation}
\label{eq:BC-local-potentials}
    H_{\mathrm{BC}}^{1,1}(X,\mathbb R)
    =
    \frac{
        \left\{
        \alpha\in C^\infty_{1,1}(X,\mathbb R)
        \;\middle|\;
        \alpha \text{ has smooth local }
        \sqrt{-1}\partial\bar\partial\text{-potentials}
        \right\}
    }{
        \sqrt{-1}\partial\bar\partial
        C^\infty(X,\mathbb R)
    }.
\end{equation}
A form having local potentials is automatically closed. Hence, for every
reduced complex space $X$, there is a natural
injective map
\begin{equation*}
    H_{\mathrm{BC}}^{1,1}(X,\mathbb R)
    \lhook\joinrel\longrightarrow
    \widetilde H_{\mathrm{BC}}^{1,1}(X,\mathbb R).
\end{equation*}
The equality occurs whenever \(X\) is a complex manifold. On a singular normal K\"ahler space, we do not know if both groups are the same. One only has
\[
    \left\{
    \text{smooth \((1,1)\)-forms with local potentials}
    \right\}
    \subseteq
    \left\{
    \text{smooth \(d\)-closed \((1,1)\)-forms}
    \right\}.
\]
The inclusion is likely to be strict in general.  We refer to \cite{Boucksom-Guedj-regularKEflow} for a treatment on the Bott-Chern cohomology for normal varieties.


A \emph{Hermitian metric} $\omega$ on $X$ is a real smooth $(1,1)$-form on $X$ such
that for every point $x$ of $X$, there exists a local embedding $\rho: U_x\hookrightarrow W \subset \C^N$ of an open neighborhood $U_x$ of $x$ into an open subset $W$ of $\C^N$ for some $N>0$ such that $\omega|_{U_x}$ is the restriction of a strictly positive smooth $(1,1)$-form on $W$.  Closedness is not part
of this definition.  We say that $\omega$ is \emph{weakly K\"ahler} if it is
Hermitian and $d$-closed, and that it is \emph{K\"ahler} if it is the restriction  under some local embedding to $\C^N$
of a strictly positive closed smooth $(1,1)$-form. 
The space $X$ is weakly K\"ahler, respectively K\"ahler, if it possesses a
form of the corresponding kind.  Every K\"ahler form is weakly K\"ahler. The notion of Kähler forms for singular spaces was introduced in \cite[Page 346]{Grauert-Modifikationen}.  We refer to  \cite{Var89,Graf-Kirschner,Pau98} for more information on K\"ahler spaces. Note that our notion of weakly K\"ahler metric is different from that in \cite{Var89}. The following result was proved in  \cite[Lemma 4]{Narashimhan-62} or \cite[Page 335]{Grauert-Modifikationen}. We include a proof for the readers' convenience.

\begin{lemma} \label{le-kählerkophuthuocvaolocal} Let $X$ be a complex space.  Let $\omega$ be a Hermitian form on $X$. Then for every $x \in X$ and every local embedding $\rho:U_x \to W \subset \C^N$ of $X$, where $U_x$ is an open neighborhood of $x$ in $X$, there exist an open neighborhood $W_x$ of $\rho(x)$ in $W$ and a Hermitian $(1,1)$-form $\eta$ on $W_x$ such that $$\omega= \rho^*\eta$$ 
on $\rho^{-1}(W_x)$. Furthermore, if $\omega$ is K\"ahler, then we can choose $\eta$ to be K\"ahler.    
\end{lemma}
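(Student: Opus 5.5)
The plan is to reduce to comparing two local embeddings of the same germ. By definition there is some local embedding $\rho':U'_x\hookrightarrow W'\subset\C^{N'}$ and a strictly positive smooth $(1,1)$-form $\eta'$ on $W'$ with $\omega=\rho'^*\eta'$ near $x$. In the Kähler case we take $\eta'$ closed as well. We are given a possibly different embedding $\rho:U_x\to W\subset\C^N$, and we must transport $\eta'$ to a neighbourhood of $\rho(x)$ in $W$.

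\textbf{Step 1: compare the two embeddings through the graph.} Shrinking neighbourhoods, the coordinate functions of $\rho'$ are holomorphic on $X$ near $x$. So they are restrictions, via $\rho$, of holomorphic functions on a neighbourhood of $\rho(x)$ in $\C^N$. Hence there is a holomorphic map $g:W_x\to\C^{N'}$ with $g\circ\rho=\rho'$ on $\rho^{-1}(W_x)$. Shrinking $W_x$, we may assume $g(W_x)\subset W'$.

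\textbf{Step 2: build $\eta$ from the pullback.} Set $\eta_0:=g^*\eta'$. Then $\rho^*\eta_0=\rho'^*\eta'=\omega$, and $\eta_0$ is smooth, semipositive, and closed if $\eta'$ is. However, $\eta_0$ need not be strictly positive, since $g$ need not be an immersion in the directions transverse to $\rho(U_x)$.

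\textbf{Step 3: add a term that restricts to zero.} Let $f_1,\dots,f_m$ be holomorphic generators of the ideal of $\rho(U_x)$ near $\rho(x)$, and consider
\[
\eta:=\eta_0+\sqrt{-1}\,\partial\bar\partial\sum_j|f_j|^2+\epsilon\,\theta .
\]
Here $\theta$ is a correction chosen to handle directions not seen by $g$ or by the $df_j$. A cleaner choice is available: since $\rho'$ is an embedding, by the standard local theory the differentials $d(g_k)$ and $df_j$ together span the cotangent space of $\C^N$ at $\rho(x)$. Indeed, the Zariski cotangent space of $X$ at $x$ is spanned by the $d(\rho'_k)$, and the kernel of the restriction of $T^*_{\rho(x)}\C^N$ to it is spanned by the $df_j$. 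Therefore
\[
\eta:=g^*\eta'+\sqrt{-1}\sum_j\partial f_j\wedge\overline{\partial f_j}
=g^*\eta'+\sqrt{-1}\,\partial\bar\partial\sum_j|f_j|^2
\]
is strictly positive at $\rho(x)$, hence on a smaller neighbourhood $W_x$. It is closed if $\eta'$ is. Its pullback is
\[
\rho^*\eta=\omega+\sqrt{-1}\,\partial\bar\partial\,\rho^*\Big(\sum_j|f_j|^2\Big)=\omega,
\]
because $\rho^*f_j=0$. Here smooth forms on $X$ are understood, following \cite{Dem85}, as the quotient of ambient forms by those vanishing on the space. Since $\sum_j|f_j|^2$ vanishes on $\rho(U_x)$ and $\partial\bar\partial$ of such a function restricts to zero, the pullback identity holds.

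\textbf{Main obstacle.} The main point is the cotangent-spanning claim in Step 3. It says that $d\rho'$, composed through $g$, together with the conormal generators $df_j$, span $T^*_{\rho(x)}\C^N$. This is the statement that $\mathfrak m_x/\mathfrak m_x^2$ is spanned by the components of any embedding. We will verify it via the exact sequence
\[
I/I\mathfrak m\;\to\;\mathfrak m_{\C^N}/\mathfrak m_{\C^N}^2\;\to\;\mathfrak m_x/\mathfrak m_x^2\;\to\;0 .
\]
We also need to check that $\partial\bar\partial$ of a smooth function vanishing on $\rho(U_x)$ defines the zero form on $X$ in Demailly's sense. This holds because the ideal of smooth forms vanishing on $X$ is stable under $d$, so $\rho^*$ of such a form is zero.
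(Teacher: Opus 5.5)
Your proof is correct and is essentially the paper's. Both arguments extend the transition map to a holomorphic map $F$ (your $g$) with $F\circ\rho=\sigma$, set $\eta=F^*\widetilde\omega+\sqrt{-1}\,\partial\bar\partial\lVert\cdot\rVert^2$ of holomorphic functions vanishing on $\rho(U_x)$, and get strict positivity at $\rho(x)$ from injectivity of the combined differential; the only difference is the choice of vanishing functions. The paper uses the components of $h=\mathrm{id}_W-G\circ F$ (extending the inverse transition map $G$ as well), which makes injectivity immediate since $dF_y(v)=0$ forces $dh_y(v)=v$, whereas you use generators of the ideal of $\rho(U_x)$ together with the conormal exact sequence, which is equally valid.
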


\begin{proof}
Fix \(x\in X\) and a local embedding
\[
\rho\colon U_x\hookrightarrow W\subset \mathbb C^N.
\]
By the definition of a Hermitian form, after shrinking \(U_x\) there exist
another local embedding
\[
\sigma\colon U_x\hookrightarrow \Omega\subset \mathbb C^M
\]
and a strictly positive smooth real \((1,1)\)-form
\(\widetilde\omega\) on \(\Omega\) such that
\[
\omega|_{U_x}=\sigma^*\widetilde\omega.
\]
Put \(y:=\rho(x)\). The biholomorphism
\[
\sigma\circ\rho^{-1}\colon \rho(U_x)\longrightarrow \sigma(U_x)
\]
and its inverse are holomorphic maps between analytic subspaces. Hence,
after shrinking the ambient neighborhoods, they extend to holomorphic maps
\[
F\colon (W,y)\longrightarrow (\Omega,\sigma(x)),
\qquad
G\colon (\Omega,\sigma(x))\longrightarrow (W,y),
\]
such that
\[
F\circ\rho=\sigma,
\qquad
G\circ\sigma=\rho
\]
on \(U_x\).
Define the holomorphic map
\[
h:=\operatorname{id}_W-G\circ F\colon W\longrightarrow \mathbb C^N.
\]
Since \(G\circ F\) restricts to the identity on \(\rho(U_x)\), we have
\[
h|_{\rho(U_x)}=0.
\]
Furthermore, if \(v\in T_yW\) satisfies \(dF_y(v)=0\), then
\[
dh_y(v)
 =v-dG_{\sigma(x)}\bigl(dF_y(v)\bigr)
 =v.
\]
It follows that
\[
(dF_y,dh_y)\colon T_yW
   \longrightarrow T_{\sigma(x)}\Omega\oplus\mathbb C^N
\]
is injective.

Consider the smooth real \((1,1)\)-form
\[
\eta
 :=
 F^*\widetilde\omega
 +\sqrt{-1}\,\partial\bar\partial\lVert h\rVert^2
\]
on a sufficiently small neighborhood of \(y\). Since \(h\) is holomorphic,
\[
\sqrt{-1}\,\partial\bar\partial\lVert h\rVert^2
 =
 \sqrt{-1}\sum_{j=1}^N dh_j\wedge d\overline{h_j}
\]
is semipositive. The injectivity of \((dF_y,dh_y)\), together with the
strict positivity of \(\widetilde\omega\), shows that \(\eta\) is strictly
positive at \(y\). After shrinking to an open neighborhood
\(W_x\Subset W\) of \(y\), the form \(\eta\) is therefore Hermitian on
\(W_x\).

Finally, since \(h\circ\rho=0\), we obtain
\[
\begin{aligned}
\rho^*\eta
&=\rho^*F^*\widetilde\omega
  +\sqrt{-1}\,\partial\bar\partial
       \lVert h\circ\rho\rVert^2 \\
&=\sigma^*\widetilde\omega
 =\omega
\end{aligned}
\]
on \(\rho^{-1}(W_x)\).

If \(\omega\) is Kähler, then in the initial choice one may take
\(\widetilde\omega\) to be \(d\)-closed. Both
\(F^*\widetilde\omega\) and
\(\sqrt{-1}\,\partial\bar\partial\lVert h\rVert^2\) are then \(d\)-closed.
Consequently, \(\eta\) is a Kähler form on \(W_x\).
\end{proof}

We deduce from the last result the following standard fact. 

\begin{lemma} \label{le-convexKähler} Let $X$ be a complex space. Then the following assertions hold:

(i) If $\omega_1, \omega_2$ are weakly Kähler forms, then $\omega_1+\omega_2$ is so,

(ii) If $\omega_1, \omega_2$ are Kähler forms, then $\omega_1+\omega_2$ is so.    
\end{lemma}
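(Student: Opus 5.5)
The plan is to reduce both assertions to a pointwise, local statement and then apply Lemma \ref{le-kählerkophuthuocvaolocal}, which allows all local representatives to be taken with respect to one and the same embedding.

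First, the sum $\omega_1+\omega_2$ is again a smooth real $(1,1)$-form on $X$. Smooth forms on $X$ are, by definition, germs of ambient forms modulo the ideal of forms vanishing on the space, and addition is compatible with this identification. So the only issue is positivity, and closedness of the local extensions in case (ii).

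Fix $x\in X$ and any local embedding $\rho\colon U_x\hookrightarrow W\subset\C^N$. We apply Lemma \ref{le-kählerkophuthuocvaolocal} to $\omega_1$ and to $\omega_2$ with this same $\rho$. This gives open neighborhoods $W^1_x,W^2_x$ of $\rho(x)$ and Hermitian forms $\eta_j$ on $W^j_x$ with $\omega_j=\rho^*\eta_j$ on $\rho^{-1}(W^j_x)$. Set $W_x:=W^1_x\cap W^2_x$ and $\eta:=\eta_1+\eta_2$ on $W_x$. A sum of two strictly positive smooth $(1,1)$-forms on an open set of $\C^N$ is strictly positive, so $\eta$ is Hermitian. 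By linearity of $\rho^*$, we get $\rho^*\eta=\omega_1+\omega_2$ on $\rho^{-1}(W_x)$, which is an open neighborhood of $x$. Hence $\omega_1+\omega_2$ is Hermitian.

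For (i), we add that $d(\omega_1+\omega_2)=d\omega_1+d\omega_2=0$, so $\omega_1+\omega_2$ is weakly Kähler. For (ii), the last part of Lemma \ref{le-kählerkophuthuocvaolocal} lets us choose $\eta_1,\eta_2$ to be Kähler, that is, strictly positive and $d$-closed. Then $\eta$ is also closed, so $\omega_1+\omega_2$ is locally the restriction of a strictly positive closed form, and therefore Kähler.

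The only genuine obstacle is that the definition gives each $\omega_j$ as a restriction under its own embedding, possibly into ambient spaces $\C^{N_j}$ of different dimensions, so the ambient forms cannot be added directly. This is exactly what Lemma \ref{le-kählerkophuthuocvaolocal} resolves by allowing a common, arbitrary embedding $\rho$. Once that lemma is available, the rest is immediate.
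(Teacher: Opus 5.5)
Your proposal is correct and is the same argument as the paper's. You use the preceding extension lemma to represent both forms through one common local embedding, add the Hermitian ambient extensions (closed ones in case (ii)), and use $d(\omega_1+\omega_2)=0$; your explicit shrinking to $W^1_x\cap W^2_x$ just spells out a step the paper leaves implicit.
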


\begin{proof}
For (i), Lemma~\ref{le-kählerkophuthuocvaolocal} allows us, for every
$x\in X$, to use one fixed local embedding and to extend both forms to
Hermitian forms on the ambient neighborhood.  Their sum is Hermitian, and
$d(\omega_1+\omega_2)=0$; hence it is weakly K\"ahler.  For (ii), the same
lemma gives closed Hermitian ambient extensions, whose sum is K\"ahler.
\end{proof}

For the remainder of this section, when discussing currents of a
fixed bidegree, we assume that \(X\) is of pure dimension \(n\). Let
$\mathcal D^{a,b}(X)$ denote compactly supported smooth $(a,b)$-forms on $X$ with
their usual topology.  A current of \emph{bidimension}
$(q,r)$ is a continuous linear functional on $\mathcal D^{q,r}(X)$.  The same
current is said to have \emph{bidegree} $(n-q,n-r)$. 
The differential of a current is defined by duality, with the usual sign.
A current is \emph{closed} if $dT=0$; for a current of pure type this is
equivalent to $\partial T=\bar\partial T=0$.

A smooth $(k,k)$-form is \emph{(strongly) positive} if it is locally the restriction of (strongly) positive forms on ambient spaces under local embeddings.   
A current $T$ of bidegree
$(1,1)$ on $X$ is \emph{positive}, written $T\geq0$,
if
\[
  \langle T,\Phi\rangle\geq 0
\]
for every compactly supported positive $(n-1,n-1)$-form $\Phi$.
Equivalently, for every local embedding $\jmath:U\hookrightarrow\Omega$ of $X$, the current $\jmath_*T$ is a (weakly) positive
current on $\Omega$.

A function $u$ on $X$ is said to be \emph{psh} (resp. \emph{quasi-psh}) if for every point $x$ of $X$, there exists a local embedding $\rho: U_x \hookrightarrow W \subset \C^N$ of an open neighborhood $U_x$ of $x$ into an open subset $W$ of $\C^N$ for some $N>0$ such that $u$ is the restriction of a psh (resp. quasi-psh) function on $W$. Let $\theta$ be a smooth $(1,1)$-form on $X$. We say that $u$ is \emph{$\theta$-psh}  if $u$ is quasi-psh and we have $\ddc u+ \theta \ge 0$ as currents on $X$  (recall that $d^c:=\frac{i}{2 \pi} (\bar \partial -\partial)$). A closed positive $(1,1)$-current $T$ on $X$ is said to have \emph{local potentials} if locally $T= \ddc u$ for some local psh function $u$ on $X$.

We recall the following definitions; see \cite[Definition~3.1]{DH24} and
\cite[Definition~2.6]{HLR26}.

\begin{definition}
Let $\alpha=[\theta]\in
\widetilde H_{\mathrm{BC}}^{1,1}(X,\mathbb R)$, where $\theta$ is a
smooth closed real $(1,1)$-form on $X$, and let $\omega$ be a Hermitian form
on $X$.

(i) Let $T$ be a closed positive $(1,1)$-current on $X$.  We say that $T$ is a current in
$\alpha$ and write $T \in \alpha$ if we have
\[
 T=\theta+\ddc\varphi\geq 0
\]
as currents on $X$,  for a $\theta$-psh function $\varphi$ on $X$;

(ii) A $(1,1)$-current $T$ on $X$ is \emph{strictly positive} if  there is a
number $\delta>0$ such that
\[
 T\geq\delta\omega
\]
as currents on $X$; $T$ is called  \emph{a K\"ahler current} if $T$ is closed and strictly positive. The class $\alpha$ is \emph{big} if it contains a
K\"ahler current.  Equivalently, there are
a $\theta$-psh function $\varphi$ and a number $\delta>0$ with
\[
 \theta+\ddc\varphi\geq\delta\omega.
\]

(iii)  $\alpha$ is \emph{nef}  if, for every $\varepsilon>0$, there exists
$f_\varepsilon\in\mathcal C^\infty(X,\mathbb R)$ such that
\[
 \theta+\ddc f_\varepsilon\geq-\varepsilon\omega.
\]
\end{definition}

We note that the above definitions are independent of the choice of smooth representative $\theta$ in $\alpha$. 
If $X$ is a compact weakly K\"ahler space, $\omega$ is a weakly K\"ahler form on $X$, and $\alpha$ is a nef class in $\widetilde H_{\mathrm{BC}}^{1,1}(X,\mathbb R)$, then the class $\alpha+ \epsilon \{\omega\}$ contains the smooth K\"ahler current
\[
   \theta+\epsilon\omega+\ddc f_{\epsilon/2}
      \geq \frac{\epsilon}{2}\omega.
\]
A priori, it is not clear if this form can be chosen to be weakly K\"ahler.  
By Corollary \ref{cor-main-equalitycone}, we know that there exists indeed such a form.  We conclude this section with the following remark.

\begin{remark} \label{re-subtlety} (i) In general, even when $X$ is normal, it seems that  a weakly K\"ahler form is not necessarily a K\"ahler form. However, we don't have an example.

(ii) 
In general, a smooth Kähler $(1,1)$-current having smooth local potentials is not necessarily a smooth Kähler form. We refer to \cite{DoVu-positivity} for a detailed discussion.   
\end{remark}

\section{Kähler cones} \label{sec-Kählercone}

We first recall the mass-concentration theorem of Demailly--P\u{a}un
\cite[Theorem~0.5]{DP04}.

\begin{theorem}\label{thm:current}
Let $M$ be a compact (connected) K\"ahler manifold of dimension $m$, and let $\beta$ be a
nef real $(1,1)$-class such that
\[
 \int_M\beta^m>0.
\]
Then $\beta$ contains a K\"ahler current
\[
 T=\beta_0+\ddc u\geq\varepsilon\omega_M,
\]
where $\beta_0$ is a smooth representative of $\beta$, $\omega_M$ is a
K\"ahler form, and $\varepsilon>0$.  Moreover, $u$ can be chosen smooth on
the complement of a proper analytic subset and with analytic, hence
logarithmic, singularities along that subset.
\end{theorem}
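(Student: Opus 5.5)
This is the Demailly--P\u{a}un mass-concentration theorem \cite[Theorem~0.5]{DP04}, recalled here as a black box. The natural route is to reproduce the argument of \cite{DP04}; I only sketch it.

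The first step is the diagonal construction. Let $\Delta\subset M\times M$ be the diagonal, and let $\pi_1,\pi_2$ be the two projections. Using local coordinates and a partition of unity, I will build a quasi-psh function $\psi$ on $M\times M$ with logarithmic poles along $\Delta$:
\[
\psi(x,y)=\sum_j \theta_j(x)\log\bigl(|x-y|^2_j\bigr).
\]
This gives a closed current $[\Delta]$-type object satisfying
\[
\ddc\psi\ge -C\,\pi_1^*\omega_M .
\]
Next, for small $\varepsilon>0$, the class
\[
\gamma_\varepsilon=\pi_1^*\beta_\varepsilon+\pi_2^*\beta_\varepsilon,\qquad \beta_\varepsilon=\beta+\varepsilon\{\omega_M\},
\]
is K\"ahler. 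By Yau's theorem we solve the Monge--Amp\`ere equation on $M\times M$ with volume form concentrating near $\Delta$. More precisely, we take a K\"ahler form $\alpha_\varepsilon\in\gamma_\varepsilon$ with
\[
\alpha_\varepsilon^{2m}=c_\varepsilon\,\frac{\delta^{2}\,\omega^{2m}}{(|x-y|^2+\delta^2)^{2m}}.
\]
The constant $c_\varepsilon$ is fixed by cohomology and stays bounded below because $\int_M\beta^m>0$.

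The central estimate shows that the push-forward
\[
T_\varepsilon=(\pi_1)_*\bigl(\alpha_\varepsilon^{m}\wedge\pi_2^*\omega_M\bigr)
\]
is essentially a current in $\beta$ bounded below by a fixed multiple of $\omega_M$. This uses the standard pointwise inequality
\[
\alpha^{2m}\le \binom{2m}{m}\,\alpha^m\wedge\omega^m\cdot\frac{\alpha^{m}\wedge\ldots}{\omega^{2m}}
\]
for eigenvalues, together with the mass concentration near $\Delta$. The volume $\int\beta^m>0$ enters through the lower bound on $\int_{M\times M}\gamma_\varepsilon^{2m}$ in the $\beta^m\times\beta^m$ direction. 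Letting $\delta\to0$ and then $\varepsilon\to0$, a weak limit $T\in\beta$ satisfies $T\ge \varepsilon_0\omega_M$. I expect this step to be the main obstacle: the eigenvalue argument and keeping constants uniform in $\varepsilon$ and $\delta$.

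Finally, to get analytic singularities, I will apply Demailly's regularization theorem to the K\"ahler current $T$. It produces currents in $\beta$ with analytic singularities that lose only an arbitrarily small amount of positivity, so they remain K\"ahler currents. These currents are smooth off the proper analytic subset $\{u=-\infty\}$.
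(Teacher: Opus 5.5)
Quoting \cite[Theorem~0.5]{DP04} is exactly what the paper does; it contains no proof of this statement. As a black box, your proposal therefore matches the paper. The sketch you add would not prove the theorem, however, because its central step is false as formulated.

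You assert that the smooth forms $T_\varepsilon=(\pi_1)_*(\alpha_\varepsilon^m\wedge\pi_2^*\omega_M)$ are bounded below by a fixed multiple of $\omega_M$, and that this bound passes to the weak limit. For a nef class that is not K\"ahler this is impossible. $T_\varepsilon$ is a smooth form in $\lambda_\varepsilon\beta_\varepsilon$, where $\lambda_\varepsilon=m\int_M\beta_\varepsilon^{m-1}\wedge\omega_M\to\lambda_0>0$. So $T_\varepsilon\geq c\,\omega_M$, even up to $o(1)$, would make $\beta-(c/\lambda_\varepsilon-\varepsilon)\{\omega_M\}$ nef for all small $\varepsilon$. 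By closedness of the nef cone, $\beta-(c/\lambda_0)\{\omega_M\}$ would then be nef, i.e.\ $\beta$ would be K\"ahler.

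Strict positivity can only appear in the limit, and the limit has to be analysed on $M\times M$. The missing statement is that a weak limit $\Theta$ of the closed positive $(m,m)$-forms $\alpha_\varepsilon^m$ satisfies $\Theta\geq c[\Delta]$ with $c>0$. Proving this takes two steps. First, you need a lower bound, uniform along the sequence, for $\int_K\alpha_\varepsilon^m\wedge\widetilde\omega^m$ on every compact neighbourhood $K$ of $\Delta$, where $\widetilde\omega=\pi_1^*\omega_M+\pi_2^*\omega_M$; this shows that $\Theta$ charges $\Delta$. Second, Skoda--El Mir and the support theorem show that $\mathbf 1_\Delta\Theta$ is a closed positive current of bidimension $(m,m)$ carried by the irreducible $m$-dimensional set $\Delta$, hence equals $c[\Delta]$. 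Only then do you get $T:=(\pi_1)_*(\Theta\wedge\pi_2^*\omega_M)\geq c\,(\pi_1)_*([\Delta]\wedge\pi_2^*\omega_M)=c\,\omega_M$, with $T$ in the class $m(\int_M\beta^{m-1}\wedge\omega_M)\,\beta$.

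Nor can that mass bound come from the pointwise eigenvalue inequality against the fixed metric $\widetilde\omega$, whatever right-hand side you prescribe. Put $f:=\alpha_\varepsilon^{2m}/\widetilde\omega^{2m}$. The inequality you quote, or the sharp Maclaurin inequality, only yields $\alpha_\varepsilon^m\wedge\widetilde\omega^m\geq c\sqrt f\,\widetilde\omega^{2m}$. No better bound in terms of $f$ alone is possible, since equality holds with $c=1$ when all eigenvalues coincide. By Cauchy--Schwarz,
\[
\int_K\sqrt f\,\widetilde\omega^{2m}\leq\Bigl(\int_K\widetilde\omega^{2m}\cdot\int_{M\times M}\alpha_\varepsilon^{2m}\Bigr)^{1/2},
\]
which tends to $0$ as $K$ shrinks to $\Delta$, because the total mass $\int_{M\times M}\alpha_\varepsilon^{2m}$ is fixed by cohomology. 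So concentrating the Monge--Amp\`ere measure near $\Delta$ does not by itself force $\alpha_\varepsilon^m$ to concentrate there.

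Producing that concentration, with a constant uniform as $\varepsilon\to0$ and controlled by $\int_M\beta^m>0$, is the actual content of the Demailly--P\u{a}un mass concentration theorem. It is exactly the step you flagged as the main obstacle, and your sketch leaves it open. Your final step, applying Demailly's regularization \cite{Dem92} to the K\"ahler current $T$, is correct.
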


\begin{lemma}[{\cite[Lemma~5]{Dem90}}]
\label{le-Demailly-exten.lemma}
Let $Y$ be an analytic subvariety of a complex space $X$. Then, there exists a quasi-plurisubharmonic function $v$ on $X$ such that
\[
 v\in C^\infty(X\setminus Y),
 \qquad
 v=-\infty\quad\text{on }Y,
\]
and $v$ has logarithmic poles along $Y$.
\end{lemma}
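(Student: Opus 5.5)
The plan is to reduce to the case of an ambient open subset of $\C^N$ by working locally. Then I would glue the local pieces with a partition of unity, following the classical argument of Demailly.

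\emph{Local construction.} Since $X$ is locally embeddable, I cover $X$ (locally finitely, and finitely on compact pieces) by open sets $U_j$ with embeddings $\rho_j\colon U_j\hookrightarrow W_j\subset\C^{N_j}$. On a slightly smaller neighbourhood, the ideal sheaf of $\rho_j(Y\cap U_j)$ in $W_j$ is generated by finitely many holomorphic functions $g_{j,1},\dots,g_{j,m_j}$. Here I use the coherence of the ideal sheaf of the analytic set $Y$ together with that of the image of $U_j$, so that the zero set on $\rho_j(U_j)$ is exactly $\rho_j(Y\cap U_j)$. I set
\[
v_j:=\log\Bigl(\sum_k |g_{j,k}|^2\Bigr)\circ\rho_j .
\]
This function is psh on $U_j$ (as the restriction of an ambient psh function), smooth off $Y$, equal to $-\infty$ exactly on $Y$, and has logarithmic poles along $Y$.

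\emph{Comparison of local functions.} On overlaps $U_j\cap U_l$, the two generator systems generate the same ideal, since both describe the reduced ideal of $Y$ pulled back to $X$. Writing each $g_{l,k}\circ\rho_l$ as a holomorphic combination of the $g_{j,k}\circ\rho_j$ and vice versa, I get $|v_j-v_l|\le C$ on compact subsets of the overlap. Moreover $v_j-v_l$ is smooth away from $Y$. This is the step I expect to be the main obstacle. To handle it, I would take the ideal of $Y$ in $\mathcal O_X$ and choose the $g_{j,k}$ as lifts of local generators of that ideal. Then both families generate the same ideal on $X$, and the coefficient relations hold on $X$, which is all that is needed.

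\emph{Gluing.} I choose a partition of unity $(\chi_j)$ subordinate to (shrunken) $U_j$, consisting of smooth functions on $X$ (restrictions of ambient smooth functions), and set
\[
v:=\log\Bigl(\sum_j \chi_j\, e^{v_j}\Bigr)=\log\Bigl(\sum_j\chi_j\sum_k|g_{j,k}\circ\rho_j|^2\Bigr).
\]
On each $U_l$ we can write $v=v_l+h_l$ with $h_l=\log\bigl(\sum_j\chi_j e^{v_j-v_l}\bigr)$. The function $h_l$ is bounded, and it is smooth off $Y$, so $v$ is smooth on $X\setminus Y$, equals $-\infty$ exactly on $Y$, and has logarithmic poles.

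For quasi-plurisubharmonicity, I compute $\ddc v$ ambiently in each chart, following Demailly's standard computation for $\log\sum\chi_j|g_j|^2$. Writing $e^{v}=\sum_j\chi_j|G_j|^2$ with $G_j$ holomorphic vector-valued, the terms involving $d\chi_j$ and $\ddc\chi_j$ are controlled by $|G_j|^2/\sum|G|^2\le C$ together with Cauchy--Schwarz. The cross terms $d\chi_j\wedge\overline{\partial G_j}\,G_j$ are absorbed into the positive part $\sum\chi_j|\partial G_j|^2/\sum\chi|G|^2$ minus the gradient term. This yields $\ddc v\ge -C\omega$ locally, and the bound extends to an ambient quasi-psh function by using the same expression with ambient extensions of the $\chi_j$. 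Hence $v$ is quasi-psh on $X$, which gives the lemma.
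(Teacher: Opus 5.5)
Your skeleton (coherent local generators, a partition of unity, $v=\log\sum_j\chi_j|G_j|^2$) is the standard construction behind Demailly's Lemma~5. The paper itself gives no proof of this lemma, only that citation. Smoothness off $Y$, $v=-\infty$ exactly on $Y$, and the logarithmic poles follow as you say. The gap is in quasi-plurisubharmonicity, which is the only substantial point, and specifically in your claim that the coefficient relations on $X$ are ``all that is needed''.

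By the paper's definition you must produce, near each $x\in X$, an ambient quasi-psh function on $W_l$ restricting to $v$. Your Hessian bound uses $|G_j|^2\le C\sum_i\chi_i|G_i|^2$, but you only know this on $\rho_l(X)$. For arbitrary holomorphic extensions of the $G_j|_X$ and smooth extensions of the $\chi_j$, it fails off $X$, exactly near $Y$ where it is needed. The ambient expression is then not quasi-psh.

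Here is an example. Take $X=\{w=0\}\subset\C^2$, $Y=\{0\}$, and $G_1=z$, $G_2=z+w$, which agree on $X$. Take $\chi_2=(\operatorname{Re}z)^2$ and $\chi_1=1-\chi_2$ near $0$, both extended independently of $w$. On the complex line $\{w=w_0\}$ with $w_0\neq0$, the function $\log\bigl(\chi_1|z|^2+\chi_2|z+w_0|^2\bigr)-\log|z|^2$ equals $\log\bigl(1+|w_0|^2\cos^2(\arg z)\bigr)+O(|z|)$. This is to leading order a nonconstant function of $\arg z$ alone, so its Laplacian is unbounded below as $z\to0$.

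Your mechanism of ``absorbing'' the cross terms into the positive part minus the gradient term also does not close. By Cauchy--Schwarz that difference is only $\ge0$, with no room to spare: it is bounded when the $G_j$ are scalar. The cross terms, however, are of the unbounded order $|\partial G|/|G|$ near $Y$. They must cancel against the mixed terms of $|\partial u|^2/u^2$, which requires grouping the $\partial\chi_j$-terms before applying Cauchy--Schwarz.

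Both problems disappear if you choose the ambient extensions through the relations, so that comparability holds ambiently.
\begin{itemize}
\item At $x$, work in a chart $W_l$ with $\chi_l(x)\ge c>0$. Write $G_j=H_{jl}G_l$ on $X$ near $x$, and extend $G_j$ to $W_l$ as $\widetilde H_{jl}\widetilde G_l$, with $\widetilde H_{ll}=\mathrm{Id}$.
\item Then ambiently $\sum_j\widetilde\chi_j|\widetilde G_j|^2=\langle Q\widetilde G_l,\widetilde G_l\rangle$, where $Q:=\sum_j\widetilde\chi_j\widetilde H_{jl}^{*}\widetilde H_{jl}$ is smooth.
\item $Q$ is positive definite near $\rho_l(x)$: by continuity $\widetilde\chi_l\ge c/2$ there, and $\widetilde\chi_j\ge-\varepsilon$ for the finitely many relevant $j$.
\item Your ambient candidate is therefore $\log|\widetilde G_l|^2_Q$, the log-norm of a holomorphic section of a trivial bundle with a smooth Hermitian metric. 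The standard curvature inequality for $\log|s|_h^2$ gives $\ddc\log|\widetilde G_l|^2_Q\ge-C\omega$ off $\{\widetilde G_l=0\}$.
\item Since this function is bounded above, it extends as a quasi-psh function across that analytic set, with value $-\infty$ there.
\end{itemize}
This observation replaces the cross-term bookkeeping entirely, makes the lack of squares in your partition of unity harmless, and completes the proof.
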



We also need to use the following standard fact. We include a proof for the readers' convenience.

\begin{lemma}\label{le-blowup}
Let $X$ be a compact  weakly K\"ahler space, and let
$\pi\colon X'\to X$ be a desingularization obtained by compositions of successive blowups.  Then $X'$ is a compact
K\"ahler manifold.
\end{lemma}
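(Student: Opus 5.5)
Since $X'$ is a compact complex manifold, it suffices to build a Kähler form on it. We proceed by induction along the tower of blowups
\[
X'=X_k\xrightarrow{\pi_k}X_{k-1}\to\cdots\xrightarrow{\pi_1}X_0=X,
\]
where each $\pi_j$ is the blowup of $X_{j-1}$ along a closed analytic subspace (coherent ideal sheaf) $\mathcal I_{j-1}$. The inductive statement is slightly stronger than the lemma: each $X_j$ carries a $d$-closed Hermitian form $\omega_j$. For $j=0$ this is the weakly Kähler form $\omega$ on $X$.

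For the induction step, fix a closed Hermitian form $\omega_{j-1}$ on $Y:=X_{j-1}$ and let $p\colon \widetilde Y\to Y$ be the blowup along $\mathcal I$.
\begin{enumerate}
\item Cover $Y$ by finitely many open sets $U_\ell$ on which $\mathcal I$ is generated by holomorphic functions $g^{(\ell)}_1,\dots,g^{(\ell)}_{r_\ell}$, so that $p^{-1}(U_\ell)\subset U_\ell\times\mathbb P^{r_\ell-1}$.
\item Choose a partition of unity $(\chi_\ell)$ subordinate to $(U_\ell)$ and set $\psi:=\sum_\ell\chi_\ell\log\sum_i|g^{(\ell)}_i|^2$. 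This is the standard construction, as in Lemma~\ref{le-Demailly-exten.lemma}.
\item On $\widetilde Y$, $p^*\psi=\log|s|_h^2$, where $s$ is the canonical section of $\mathcal O(-E)$ for the exceptional divisor $E$ and $h$ is a smooth hermitian metric. Consequently $\Theta:=\ddc\log|s|_h^2+[E]$ is a smooth closed $(1,1)$-form on $\widetilde Y$.
\item On $p^{-1}(U_\ell)$, $\Theta$ equals the pullback of the Fubini--Study form of $\mathbb P^{r_\ell-1}$ up to error terms $O(d\chi_\ell)$ and $O(\ddc\chi_\ell)$. These errors come from the $\mathcal C^\infty$ functions $\log\bigl(\sum_i|g^{(\ell)}_i|^2/\sum_i|g^{(\ell')}_i|^2\bigr)$, which are bounded with bounded derivatives because the two generating systems generate the same ideal. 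Hence $\Theta$ is positive in the fibre directions of $p$ and bounded below by $-C\,p^*\omega_{j-1}$.
\item Set $\widetilde\omega:=C'p^*\omega_{j-1}+\Theta$ for $C'$ large. This form is smooth and $d$-closed.
\item To see that $\widetilde\omega$ is Hermitian, use Lemma~\ref{le-kählerkophuthuocvaolocal}: it lets us extend $\omega_{j-1}$ locally to a strictly positive form on an ambient $W\subset\mathbb C^N$. Then $p^{-1}(U)\hookrightarrow W\times\mathbb P^{r-1}$ is a local embedding, and on it $\widetilde\omega$ is the restriction of $C'\mathrm{pr}_1^*\eta+\mathrm{pr}_2^*\omega_{FS}+(\text{small error})$, which is strictly positive.
\end{enumerate}
This completes the induction and produces a closed Hermitian form $\omega_k$ on $X'$. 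Since $X'$ is smooth, a $d$-closed Hermitian form on it is exactly a Kähler form, which proves the lemma.

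The main obstacle is the gluing in steps (2)--(4). On a singular $Y$ we cannot use a global line-bundle metric directly: the ideal $\mathcal I$ only becomes invertible after the blowup. So we must check carefully that the patched function $\psi$ pulls back to $\log|s|^2_h$ plus a smooth function, and that the resulting error terms are controlled by $p^*\omega_{j-1}$, uniformly by compactness. The second delicate point is the strict positivity of $\widetilde\omega$ in step (6), which must hold in the sense of ambient restrictions, not merely on the tangent spaces of $\widetilde Y$. To handle this, we work in the product embedding $W\times\mathbb P^{r-1}$ and take $C'$ large enough that the ambient form $C'\mathrm{pr}_1^*\eta+\mathrm{pr}_2^*\omega_{FS}$ dominates the smooth error terms on each compact piece.
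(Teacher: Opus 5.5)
\textbf{Comparison of routes.} Your route is viable, but it differs genuinely from the paper's. The paper argues once, directly on the manifold $X'$. It takes a $\pi$-relatively ample line bundle $L$ for the composite $\pi$ and a metric whose curvature $\eta$ is positive on $\ker d\pi$, then shows $C\pi^*\omega_X+\eta>0$; only honest tangent spaces of $X'$ enter. You instead induct along the tower and build the relatively ample metric on each $\mathcal O(-E)$ by hand from local generators of the ideal. This has two advantages. It is more self-contained, because the paper's metric positive on $\ker d\pi$ tacitly needs the same gluing of local Fubini--Study weights. It also proves more: every intermediate blowup of a weakly K\"ahler space is weakly K\"ahler. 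The price is that you must check ambient positivity on possibly singular intermediate spaces, which is why your inductive statement must be ``closed Hermitian''. (Minor slip: with $s$ the canonical section of $\mathcal O(E)$, the smooth form is $\ddc\log|s|_h^2-[E]$, not $+[E]$.)

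\textbf{Step 4 needs repair.} The ratios $\log\bigl(\sum_i|g^{(\ell)}_i|^2/\sum_i|g^{(\ell')}_i|^2\bigr)$ are bounded on $Y$ but in general not continuous there. For $(z_1,z_2)$ versus $(z_1,z_1+z_2)$ at $0\in\C^2$, the ratio is $1$ on $\{z_1=0\}$ and $2$ on $\{z_2=-z_1\}$. They are smooth only on $\widetilde Y$, so boundedness alone does not control the errors by $p^*\omega_{j-1}$, which degenerates along fibres. Moreover, $\Theta$ is \emph{not} the single-chart form $p_\ell^*\omega_{\mathrm{FS}}$ up to $O(d\chi)$, $O(\ddc\chi)$. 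Write $g^{(\ell)}_i\circ p=e\,t^{(\ell)}_i$ with $e$ a local generator of $\mathcal I\cdot\mathcal O_{\widetilde Y}$, set $p_\ell:=[t^{(\ell)}]$, and set $f_\ell:=\log\bigl(\sum_i|t^{(\ell)}_i|^2/\sum_i|t^{(\ell_0)}_i|^2\bigr)$. Then on $p^{-1}(U_{\ell_0})$
\[
\Theta=\sum_\ell(\chi_\ell\circ p)\,p_\ell^*\omega_{\mathrm{FS}}+\sum_\ell\Bigl(d(\chi_\ell\circ p)\wedge\dc f_\ell+df_\ell\wedge\dc(\chi_\ell\circ p)+f_\ell\,p^*\ddc\chi_\ell\Bigr).
\]
The discrepancy $\sum_\ell(\chi_\ell\circ p)\,p_\ell^*\omega_{\mathrm{FS}}-p_{\ell_0}^*\omega_{\mathrm{FS}}$ is of order one on vertical vectors. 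In the example, on the exceptional $\mathbb P^1$ it is a cutoff value times $\phi^*\omega_{\mathrm{FS}}-\omega_{\mathrm{FS}}$, where $\phi[a:b]=[a:a+b]$.

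\textbf{Step 6 needs repair.} The ``small error'' there is therefore not small. Enlarging $C'$ cannot absorb it, because $C'\mathrm{pr}_1^*\eta$ vanishes in vertical directions.

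\textbf{The correct mechanism} is the displayed formula itself. Let $K_y:=\ker dp_y\subset T_y\widetilde Y$, where $T_y\widetilde Y$ is the Zariski tangent space.
\begin{enumerate}
\item The first sum is semipositive, and positive definite on $K_y$, because each $(p,p_\ell)$ is an embedding.
\item Every remaining term contains a horizontal factor, $d(\chi_\ell\circ p)$ or $p^*\ddc\chi_\ell$, and so vanishes on $K_y$.
\item Since $\omega_{j-1}$ is ambient-Hermitian, $p^*\omega_{j-1}$ is semipositive on $T_y\widetilde Y$ with kernel exactly $K_y$. Linear algebra then gives $C_y$ with $C_y\,p^*\omega_{j-1}+\Theta>0$ on $T_y\widetilde Y$.
\item Add $M\,\ddc\lVert h\rVert^2$ for local equations $h$ of $\widetilde Y$ in $W\times\mathbb P^{r-1}$, as in the proof of the first lemma of Section~\ref{sec-premi}. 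This yields a strictly positive ambient extension near $y$.
\item That condition is open and monotone in $C$, so compactness of $\widetilde Y$ gives a uniform $C'$.
\end{enumerate}
Your bound $\Theta\geq -C\,p^*\omega_{j-1}$ is in fact true, by Cauchy--Schwarz against this vertical positivity, but not for the reason you gave. This same pointwise argument is also what underlies the paper's sentence that $C\pi^*\omega_X$ ``dominates the bounded negative part'' of $\eta$.
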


\begin{proof}
Let $\omega_X$ be a weakly K\"ahler form on $X$.  Its pullback
$\pi^*\omega_X$ is a smooth closed semipositive $(1,1)$-form on $X'$.  At a
point $x'\in X'$, it is positive on every tangent vector which is not in
$\ker d\pi_{x'}$.

Since $\pi$ is projective, there is a $\pi$-relatively ample line bundle
$L$ on $X'$.  Choose a smooth Hermitian metric $h$ on $L$ whose curvature
form
\[
 \eta:=\frac{\sqrt{-1}}{2\pi}\Theta_h(L)
\]
is positive on the vertical tangent spaces $\ker d\pi$.  Equivalently, one
may embed $X'$ over $X$ into a relative projective space and restrict a
relative Fubini--Study form.  The form $\eta$ may have negative horizontal
eigenvalues, but these are uniformly bounded because $X'$ is compact.
Consequently, for all sufficiently large constants $C>0$, the closed form
\[
 \omega_{X'}:=C\pi^*\omega_X+\eta
\]
is positive definite.  Indeed, $\eta$ supplies positivity in the vertical
directions, while $C\pi^*\omega_X$ dominates the bounded negative part of
$\eta$ in the remaining directions.  Thus $\omega_{X'}$ is a K\"ahler form
on $X'$. 
\end{proof}

One can consult \cite{Var89} for more information on Kählerness under morphisms. 
The following result is a version of \cite[Theorem 4]{Dem90}, \cite[Proposition~3.3(i)]{DP04} and \cite[Lemma~1]{Pau98}.

\begin{lemma}\label{le-extension-DP}
Let $X$ be a complex space, let $A\subset X$ be a compact analytic subset,
and let $\theta$ be a smooth closed real $(1,1)$-form on $X$.  Assume that
there is a smooth function $\varphi_A$ on $A$ such that
\[
 \theta|_A+\ddc\varphi_A
\]
is weakly K\"ahler on $A$.  Then there exist an open neighborhood $U$ of
$A$ and a smooth function $\varphi$ on $U$ such that
\[
 \theta+\ddc\varphi
\]
is weakly K\"ahler on $U$. 
If $\theta$ is locally the restriction of a closed smooth form under local embeddings (or equivalently, if $\theta$ has local $\ddc$-potentials), then the form $ \theta+\ddc\varphi$ can be chosen to be K\"ahler on $U$.
\end{lemma}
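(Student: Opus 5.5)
We follow the classical strategy of \cite[Theorem 4]{Dem90} and \cite[Lemma~1]{Pau98}: first extend $\varphi_A$ to a smooth function, then add a large multiple of a function that vanishes to second order along $A$ in suitable ambient coordinates, and finally glue the local pieces with a regularized maximum.

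\textbf{Step 1: local extension.} Extend $\varphi_A$ to a smooth function $\tilde\varphi$ on a neighbourhood of $A$. This is possible because smooth functions on the analytic subset $A$ are, by definition, locally restrictions of ambient smooth functions, and these local extensions can be patched with a partition of unity on $X$. Put $\theta_1:=\theta+\ddc\tilde\varphi$, so that $\theta_1|_A$ is weakly K\"ahler.

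Fix $x\in A$ and a local embedding $\rho\colon U_x\hookrightarrow W\subset\C^N$. By Lemma~\ref{le-kählerkophuthuocvaolocal} applied on $A$, there is a Hermitian form $\eta_x$ near $\rho(x)$ whose pullback to $A\cap U_x$ equals $\theta_1|_A$. The form $\theta_1$ is itself locally the pullback of some smooth real $(1,1)$-form $\tilde\theta_1$ on $W$. Let $g_1,\dots,g_k$ be holomorphic generators of the ideal of $\rho(A)$, and consider
\[
\theta_1+C\,\ddc\textstyle\sum_j|g_j|^2 .
\]
At a point $a\in A$, the term $\ddc\sum_j|g_j|^2=\sum_j \sqrt{-1}\,dg_j\wedge d\bar g_j$ (up to the normalisation of $\ddc$) is strictly positive on every Zariski tangent vector of $X$ at $a$ that is not tangent to $A$. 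On vectors tangent to $A$, the form $\theta_1$ is positive. A standard linear-algebra argument then shows that for $C$ large the sum is positive on the Zariski tangent space of $X$ at points of $A$ near $x$, and hence, by continuity, on a neighbourhood in $X$.

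This local step is the main obstacle. The Zariski tangent spaces of $X$ and $A$ jump, the set of "vectors tangent to $A$" is not a bundle, and the compactness argument must be carried out on the Zariski tangent cone. I would handle it by working in $W$ and using the ambient forms: $\tilde\theta_1-\eta_x$ restricts to zero on $\rho(A)$, so near $\rho(A)$ it is $O(|g|)$ plus terms involving $dg_j$ and $d\bar g_j$. Then $\tilde\theta_1+C\,\ddc|g|^2\ge \tfrac12\eta_x - C'|g|\,\omega_{\mathrm{eucl}}$, and the error is absorbed after shrinking to $\{|g|<\delta\}$. The same computation with closed $\tilde\theta_1$ and a closed $\eta_x$ (the K\"ahler case of Lemma~\ref{le-kählerkophuthuocvaolocal}) yields the K\"ahler statement.

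\textbf{Step 2: globalization.} Cover $A$ by finitely many such charts $U_i$, with local functions $\psi_i=C\sum_j|g^{(i)}_j|^2$. Following Demailly, set
\[
\varphi=\tilde\varphi+\widetilde{\max}_i\big(\psi_i+\epsilon^2\chi_i\big),
\]
where $\widetilde{\max}$ is a regularized maximum and the $\chi_i$ are cutoff-type functions chosen negative near $\partial U_i$. The choice must guarantee that near $A$ each maximum is achieved only by indices whose charts contain the point. This works because all $\psi_i$ are $O(d(\cdot,A)^2)$ and are comparable to one another: $\psi_i\asymp\psi_j$ on overlaps, since the ideals coincide.

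The regularized maximum of functions $u_i$ with $\theta_1+\ddc u_i$ positive is again such that $\theta_1+\ddc\widetilde{\max}_i u_i$ is positive. We therefore obtain $\theta+\ddc\varphi$ weakly K\"ahler, that is Hermitian and closed, on a neighbourhood $U$ of $A$. In the case where $\theta$ has local potentials, the local pieces $\tilde\theta_1+C\,\ddc|g|^2$ are closed ambient Hermitian forms. Since the regularized maximum of local ambient psh-type potentials stays ambient-closed, the resulting form is K\"ahler.
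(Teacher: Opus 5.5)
Your Step~1 and overall plan are the classical ones, and at points of $A_{\mathrm{reg}}$ your local argument works. There $\bigcap_j\ker dg_j(a)=T_aA$, any ambient extension $\tilde\theta_1$ restricted to $T_aA$ is exactly $\theta_1|_A$, and the linear algebra is locally uniform on $A_{\mathrm{reg}}$.

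\textbf{The gap is at singular points of $A$}, and the repair you propose there rests on a false claim. A smooth form on $A$ (in the sense of \cite{Dem85}) is determined by its pullback to $A_{\mathrm{reg}}$. So ``$\tilde\theta_1-\eta_x$ restricts to zero on $\rho(A)$'' does not make it $O(|g|)$ plus multiples of $dg_j, d\bar g_j$. Nor need $\tilde\theta_1$ be positive on the Zariski tangent space of $A$.

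Concretely, let $A=L_1\cup L_2\subset\mathbb{P}^2$ be two lines meeting at $p$, written $A=\{xy=0\}$ in affine coordinates centred at $p$. Take $\theta=\omega_{\mathrm{FS}}+M\,dd^c\bigl(\chi\,\mathrm{Re}(x\bar y)\bigr)$ with a cut-off $\chi\equiv 1$ near $p$, and $\varphi_A=\tilde\varphi=0$.
\begin{itemize}
\item Since $x\bar y$ vanishes on $A$, $\theta|_A=\omega_{\mathrm{FS}}|_A$ is K\"ahler.
\item For $M$ large, $\theta(p)$ is indefinite on $T_p\mathbb{P}^2=T^Z_pA$.
\item Every $g$ in the ideal $(xy)$ has $dg(p)=0$. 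Hence $\theta+C\,dd^c\sum_j|g_j|^2$ is not positive at $p$ for any $C$.
\item $\theta-\omega_{\mathrm{FS}}$ restricts to zero on $A$ yet is nonzero at $p$, while every form $O(|g|)+\sum_j(a_j\wedge dg_j+b_j\wedge d\bar g_j)$ vanishes at $p$. So your claimed decomposition fails.
\item Even at the smooth points $(t,0)$ one needs $C\gtrsim M^2/|t|^2$, so no uniform constant exists near $\mathrm{Sing}(A)$.
\end{itemize}
Your regularized-maximum gluing is a different route from the paper's, but it cannot rescue this, since the local pieces themselves fail.

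\textbf{What is missing} is an induction over a stratification of $A$ by smooth strata, treating the deepest strata first. This is what the paper does, following \cite{Dem90,Pau98}. Write $A=A_0\supset A_1\supset\cdots$ with $S_j=A_j\setminus A_{j+1}$ a manifold, locally $\{z'=0\}$ in a chart, and add $\chi_s\,\varepsilon^3\log(1+\varepsilon^{-4}|z'|^2)$.
\begin{itemize}
\item Along $S_j$ its $dd^c$ equals $\varepsilon^{-1}\chi_s\,dd^c|z'|^2$, which gives positivity in all directions normal to $S_j$. At a point stratum such as the node, this means all ambient directions, which no function built from the ideal of $A$ can supply.
\item Tangential positivity comes from restricting the already Hermitian form $(\theta+dd^c\varphi_{j+1})|_A$ to the manifold $S_j$.
\item Everywhere $dd^c\ge -C\varepsilon\,\omega$, so the ambient positivity already obtained near $A_{j+1}$ survives.
\end{itemize}
These perturbations are simply summed rather than maximized. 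The uniform $O(\varepsilon)$ bound on the negative part also makes pulling them back to other charts via holomorphic maps $F$ harmless. With $\theta'$ chosen closed, the same construction directly gives closed positive ambient forms in the K\"ahler case.
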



\begin{proof} We follow ideas from the proof of \cite[Theorem 4]{Dem90}.
By the definition of a smooth function on a complex space, $\varphi_A$
extends to a smooth function on a neighborhood of $A$.  Replacing $\theta$
by $\theta+\ddc\varphi_A$, we may therefore assume that $\theta|_A$ is weakly K\"ahler.

Let $M:= \dim A+1$ and $A_0:=A$. Let $A_j$ be the union of irreducible components of $A_{j-1}$ of dimension $\le M-1-j$ and $\Sing(A_{j-1})$ for $1 \le j \le M-1$. We put $A_M:= \varnothing$.  
We obtain a stratification
\[
 A=A_0\supset A_1\supset\cdots\supset A_M=\varnothing,
 \qquad S_j:=A_j\setminus A_{j+1},
\]
whose stratum $S_j$ is either empty or a smooth manifold of pure dimension $M-1-j$.  We construct, by inverse induction over $0 \le j \le M$, a smooth function $\varphi_j$ on a
neighborhood $D_j$ of $A$, an open neighborhood $D'_j$ of $A_j$ (in $X$) such that the following three properties hold:


(i) The restriction $(\theta+\ddc \varphi_j)|_A$ is weakly Kähler on $A$, 

(ii) For every point $x\in D'_j$, there exist an local embedding $\rho_x: U_x \to W$ in $\C^N$ and a smooth form $\theta'$ on $W$, smooth function $\varphi'_j$ on $W$ such that $\theta'|_{U_x}= \theta$,  and $\varphi'_j|_{U_x}= \varphi_j$ and  $\theta'+ \ddc \varphi'_j$ is a Hermitian form on $W$. 

(iii) If $\theta$ is locally the restriction of a closed smooth form under local embeddings, then $\theta'$ in (ii) can be chosen to be closed.

Since $A_M$ is empty, it suffices to choose $\varphi_M:=0$, $D_M:= X$ and $D'_M= \varnothing$. 
Suppose now that for a $0\le j \le M-1$ what we want is true for  $A_{j+1}$, \emph{i.e.}, there are an open neighborhoods $D_{j+1},D'_{j+1}$ of $A$, $A_{j+1}$ respectively, and  a smooth function $\varphi_{j+1}$ on $D_{j+1}$ so that the above three properties hold for $\varphi_{j+1}$.  

Let $U_1 \Subset U:=D'_{j+1}$ be an open subset of $D'_{j+1}$ containing $A_{j+1}$.
Cover $A_j\backslash U$ by a finitely many open subsets $P_1, \ldots, P_m$ on $X$ 
such that $P_1,\ldots, P_{m}$ do not intersect $U_1$.
We note that $A_j\cap P_s \subset S_j$, which is smooth for $1 \le s \le m$.  By shrinking $P_s$ if necessary, we can assume that there is an embedding $\rho_s: P_s \hookrightarrow P_s'$, which is an open subset of $\C^{N_s}$, for $1\le s\le m$ and $A_{j} \cap P_s =S_{j}\cap P_s$ is given by $z'_s=0$ on $P'_s$, where $(z_{s,1},\ldots, z_{s,N_s})$ are the coordinates on $\C^{N_s}$ for $1\le s \le m$ and $z'_s=(z_{s,r_s+1}, \ldots , z_{s,N_s})$ ($r_s$ is the dimension of $A_j\cap P_s$). 

Let $\chi_s$ be  a smooth nonnegative cut-off function in $P'_s$  such that

(a) $\supp\chi_s \Subset P'_s$ for $1\le s\le m$,

(b) $\sum_{s=1}^m \chi_s \circ \rho_s(x) >0$ for every $x$ in an open neighborhood of  $A_j\backslash U$.

For $0<\epsilon\le 1/2$, consider on $P_s'$ the function
\[
        g_{s,\epsilon}
        :=
        \epsilon^3
        \log\bigl(1+\epsilon^{-4}|z_s'|^2\bigr),
        \qquad
        |z_s'|^2
        :=
        \sum_{\ell=r_s+1}^{N_s}|z_{s,\ell}|^2.
\]
This is the scaled logarithmic function used in
\cite[Theorem~4]{Dem90}. On every fixed relatively compact subset of
$P_s'$, it satisfies
\[
        0\le g_{s,\epsilon}
        \le C\epsilon^3|\log\epsilon|,
        \qquad
        |dg_{s,\epsilon}|\le C\epsilon,
        \qquad
        dd^c g_{s,\epsilon}\ge 0,
\]
where $C$ is independent of $\epsilon$. Indeed,
\[
        dg_{s,\epsilon}
        =
        \frac{\epsilon^{-1}d|z_s'|^2}
        {1+\epsilon^{-4}|z_s'|^2},
\]
and hence
\[
        |dg_{s,\epsilon}|
        \le
        C\frac{\epsilon^{-1}|z_s'|}
        {1+\epsilon^{-4}|z_s'|^2}
        \le C\epsilon.
\]
Moreover, along $z_s'=0$,
\[
        g_{s,\epsilon}=0,\qquad
        dg_{s,\epsilon}=0,\qquad
        dd^c g_{s,\epsilon}
        =
        \epsilon^{-1}dd^c|z_s'|^2.
\]
Define a smooth function on $P'_s$ by
\[
        h_{s,\epsilon}
        := \chi_s g_{s,\epsilon},
\]
Using  the above estimates for $g_{s,\epsilon},dg_{s,\epsilon}$, we obtain
\begin{align}\label{ine-chanduoiddchs}
\ddc h_{s,\epsilon} \ge \chi_s \ddc g_{s,\epsilon}- C\epsilon\omega_{\C^{N_s}} \ge -C\epsilon\omega_{\C^{N_s}},
\end{align}
for some constant $C>0$ independent of $\epsilon$,
and 
\begin{align}\label{ine-chanduoiddchs2}
\ddc h_{s,\epsilon} = \epsilon^{-1} \chi_s \ddc |z'_{s}|^2
\end{align}
at every point in $A_j \cap P_s$.
Define
$$\varphi_j:= \sum_{s=1}^{m} h_{s,\epsilon}\circ\rho_s+\varphi_{j+1} $$
which is a smooth function on an open neighborhood of $A$. By (\ref{ine-chanduoiddchs}) and the fact that $(\theta+\ddc \varphi_{j+1})|_A$ is Hermitian on $A$,   we see that the condition (i) is satisfied for $\varphi_j$ if $\epsilon$ is small enough.

Let $x\in U$. By the induction hypothesis, there is a local embedding $\rho: U_x \to W$ in $\C^N$ and $\varphi'_{j+1}$, $\theta'$ are smooth extension of $\varphi_{j+1}, \theta$ respectively in $W$ such that 
$$\ddc \varphi'_{j+1}+ \theta' \ge \delta\omega_{\C^N}$$
on $W$ for some constant $\delta>0$. Because of (\ref{ine-chanduoiddchs}), by decreasing $\epsilon$ so that $h_{s,\epsilon}\circ \rho_s$ admits a smooth extension $h'_s$ to $W$ so that 
$$\ddc h'_s \ge -\delta \omega_{\C^N}/(2m).$$ 
Consequently, for $\varphi'_j:= \varphi'_{j+1}+ \sum_s h'_s$, we obtain 
$$\theta'+ \ddc \varphi'_{j} \ge  \delta \omega_{\C^N}/2$$
on $W$, hence, it is a Hermitian form on $W$. 

Since $A_j \backslash U$ is compact, we can find a constant $c>0$ such that for every $x\in A_j \backslash U$, there exists $1\le s \le m$ such that 
\begin{align}\label{ine-chirho00}
\chi_s \circ \rho_s(x)\ge 2c.
\end{align}
Fix now such a point $x$ and assume that (\ref{ine-chirho}) holds for $s=1$. 
We use the local embedding $\rho=\rho_1: U_x\subset P_1 \to W=P'_1$ as above. By shrinking $U_x$ if necessary, using (\ref{ine-chirho00}) gives
\begin{align}\label{ine-chirho}
\chi_1 \circ \rho_1\ge c
\end{align}
on $U_x$.
Let $\theta', \varphi'_{j+1}$ be arbitrary smooth extensions of $\theta, \varphi_{j+1}$ to $W$.
By (\ref{ine-chanduoiddchs2}) for $s=1$, (\ref{ine-chirho}) and the fact that $\theta+\ddc \varphi_{j+1}$ is positive along $A_j\cap P_s$, we see that 
$$\theta'+ \ddc(\varphi'_{j+1}+ h_{1,\epsilon})$$
is Hermitian in $W$ at every point on $\rho(A_j\cap U_x)$  if $\epsilon$ is small enough. Hence by smoothness, the form $\theta'+ \ddc(\varphi'_{j+1}+ h_{1,\epsilon})$ is Hermitian on an open neighborhood of  $\rho(A_j\cap U_x)$ in $W$. 

  For $s$ such that $\supp \chi_{s}\circ\rho_s \cap U_x\not = \varnothing$, by shrinking $W$ if necessary, there is a holomorphic map $F: W \to P'_s$ with  $F\circ \rho= \rho_s$ on $U_x$.  Let $h'_s:= h_{s,\epsilon}\circ F$ which is a smooth extension of $h_{s,\epsilon}\circ \rho_s$.  Using (\ref{ine-chanduoiddchs}) and shrinking $P_1,P_s,W$ if necessary, one gets 
\begin{align}\label{ine-chanduokhphay}
\ddc h'_s \gtrsim - \epsilon \omega_{\C^N}.
\end{align}
This combined with the Hermitian property of $\theta'+ \ddc(\varphi'_{j+1}+ h_{1,\epsilon})$  show that 
$$\theta'+\ddc \varphi'_j$$
is Hermitian in a small open neighborhood of $\rho(U_x\cap A_j)$, where 
$\varphi'_j:= \varphi'_{j+1}+ \sum_s h'_s$. Thus (ii) and (iii) follow by choosing $\epsilon$ small enough (this can be done uniformly because there are only finitely many charts $P_s$'s).

If $\theta$ is locally the restriction of a closed smooth form under local embeddings, then the form $\theta'$ can be chosen to be $\ddc$-exact for every $s$. Thus $\theta'+ \ddc \varphi'_j$ is a K\"ahler form on $W$. The proof is complete.         
\end{proof}

Let $\delta \in (0,1/2]$, and 
$\chi_\delta\colon\mathbb R\to\mathbb R$ be a smooth convex function such
that
\[
 0\leq\chi_\delta'\leq1,
 \qquad
 \chi_\delta(t)=0\text{ for }t\leq-\delta,
 \qquad
 \chi_\delta(t)=t\text{ for }t\geq\delta.
\]

\begin{lemma} \label{le-noihaipsh} Let $Y$ be a complex space  of pure dimension, let $A$ be an analytic subset of $Y$. Let $\theta$ be a $(1,1)$-form on $Y$. Let $u$ be  a  smooth $\theta$-psh function on $Y\backslash A$. Let $v$ be a  smooth $\theta$-psh function on an open neighborhood $U$ of $A$ such that $v>u+2\delta$ near $A$ and  
$$u(x) \ge \limsup_{z \in U\to x}v(z)+2\delta$$
for $x\in \partial U$.  Define
\[
w=
\begin{cases}
v+\chi_\delta(u-v),&\text{on }U\setminus A,\\
u,&\text{on }Y\setminus U,
\end{cases}
\]
Then, $w$ extends to a  smooth $\theta$-psh function on $Y$.
\end{lemma}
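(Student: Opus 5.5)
The plan is to check three things: $w$ is well defined and smooth near every point, it glues across $\partial U$ and $A$, and it is $\theta$-psh. The key observation is that $\chi_\delta$ is a smoothed version of $t\mapsto\max(t,0)$. Hence $v+\chi_\delta(u-v)$ is a regularized maximum of $u$ and $v$:
\[
v+\chi_\delta(u-v)=
\begin{cases}
v, & u-v\le-\delta,\\
u, & u-v\ge\delta,
\end{cases}
\]
and in between it is a smooth function of $(u,v)$. On $U\setminus A$ both $u$ and $v$ are smooth, so $w$ is smooth there.

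\emph{Behaviour near $A$.} By hypothesis $v>u+2\delta$ on $V\setminus A$ for some neighborhood $V\subset U$ of $A$. There $u-v<-2\delta\le-\delta$, so $w=v$ on $V\setminus A$. We then define $w:=v$ on $A$. This gives a smooth extension across $A$, because $v$ is smooth on all of $U$.

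\emph{Behaviour near $\partial U$.} The hypothesis $u(x)\ge\limsup_{z\to x}v(z)+2\delta$ for $x\in\partial U$ is used together with the continuity of $u$ near $\partial U$. Note that $\partial U\cap A=\varnothing$, since $A\subset U$ is open-contained. I would then argue that there is a neighborhood $N$ of $\partial U$ such that $u-v>\delta$ on $N\cap U$. Fix $x\in\partial U$. Continuity of $u$ at $x$ and the definition of $\limsup$ give a neighborhood $N_x$ of $x$ on which $u>v+\delta$ on $N_x\cap U$. Taking $N=\bigcup_x N_x$ gives the claim. On $N\cap U$ we therefore have $w=u$, and $w=u$ on $N\setminus U$ by definition. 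So $w=u$ on $N$, and $w$ is smooth across $\partial U$. This step is where the strict margin $2\delta$ is needed: it absorbs the passage from the $\limsup$ inequality to a pointwise inequality on a neighborhood. If $\partial U$ is noncompact one does not need uniformity, because smoothness is a local property.

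\emph{Positivity.} Smoothness and positivity are both local, so it remains to check $\theta+\ddc w\ge0$ on $U\setminus A$. On a local embedding, $\theta$-psh smooth functions can be handled as follows. The standard regularized-max computation gives
\[
\theta+\ddc w=(1-\chi_\delta'(u-v))(\theta+\ddc v)+\chi_\delta'(u-v)(\theta+\ddc u)+\chi_\delta''(u-v)\,d(u-v)\wedge\dc(u-v).
\]
Each term is $\ge0$, since $0\le\chi_\delta'\le1$, $\chi_\delta''\ge0$, and both $\theta+\ddc u$ and $\theta+\ddc v$ are $\ge0$.

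The one subtle point is quasi-plurisubharmonicity on the singular space, namely that $w$ is locally the restriction of an ambient quasi-psh function. I would handle this by taking smooth ambient extensions $\tilde u,\tilde v$ and setting $\tilde v+\chi_\delta(\tilde u-\tilde v)$, which is smooth ambiently. Since $w$ is smooth on $Y$, it is automatically quasi-psh: on a chart, add $C|z|^2$. Positivity of the current $\theta+\ddc w$ then follows from the pointwise formula above on $\Reg Y$, together with the fact that a smooth form which is positive on the regular part defines a positive current.

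The main obstacle is the neighborhood-of-$\partial U$ argument, that is, ensuring that the two definitions agree on an open set; the rest is routine.
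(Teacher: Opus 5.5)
Your proposal is correct and follows the paper's proof. You show $w=v$ near $A$ and $w=u$ on a neighborhood of $\partial U$ (the latter justified more carefully than in the paper, via continuity of $u$ on $\partial U\subset Y\setminus A$ and the $2\delta$ margin), and you get positivity on $U\setminus A$ from the regularized-maximum formula. The only cosmetic difference is that you check positivity pointwise on $\Reg Y$ and use that a smooth form positive there defines a positive current, whereas the paper suggests computing with ambient smooth extensions of $u,v$ and restricting afterwards.
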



\proof Since $w=v$ near $A$, we extend it to be equal to $v$ near $A$. The hypothesis implies that $w=u$ near $\partial U$.  One sees that $w$ is a smooth function on $Y$.  Therefore, $w$ is quasi-psh on $Y$.  We check that $w$ is $\theta$-psh. We already have this property near $A$ and on $Y\backslash U$ and near $\partial U$ (because $w=v$ near $A$, $w=u$ on $Y\backslash U$ and near $\partial U$).  On $U\backslash A$, since $u,v$ are smooth, we compute 
\begin{align*}
 \theta+\ddc w
 &=\chi_\delta'(u-v)(\theta+\ddc u)
   +(1-\chi_\delta'(u-v))(\theta+\ddc v)\\
 &\quad+\chi_\delta''(u-v)
      d(u-v)\wedge\dc(u-v)
\end{align*}
which is a positive current (to be rigorous, one should extend $u,v$ to smooth functions on local ambient spaces under local embeddings, then perform the above computations and restrict to $U$ after that). 
The proof is complete.
\endproof


Let
$\alpha\in\widetilde H_{\mathrm{BC}}^{1,1}(X,\mathbb R)$. We say that $\alpha$ is strongly nef if for every constant $\epsilon>0$, there exists a weakly Kähler form in $\alpha+ \epsilon \{\omega\}$. This is equivalent to saying that $\alpha$ is the limit of a sequence of weakly Kähler classes. We will see later that strongly nef classes are the same as nef ones. However, at this stage of the proof of Theorem \ref{thm:main-intro}, we have to consider this notion. Observe that for a strongly nef class $\alpha$ and every analytic subspace $A$ of $X$, we have $\alpha|_A$ is also strongly nef. 

\begin{proposition}
\label{prop:ambient-induction}
Let $X$ be a compact weakly K\"ahler space, and let
$\alpha\in\widetilde H_{\mathrm{BC}}^{1,1}(X,\mathbb R)$ be a strongly nef
class represented by a smooth closed real $(1,1)$-form $\theta$.  Assume that
\[
 \int_V\alpha^{\dim V}>0
 \tag{$*$}
\]
for every positive-dimensional irreducible analytic subset $V\subset X$.
Then $\alpha$ contains a weakly K\"ahler form.  If $\theta$ has smooth local
$\ddc$-potentials, then $\alpha$ contains a K\"ahler form.
\end{proposition}

We note that $X$ is not necessarily pure-dimensional.

\begin{proof}
We argue by induction on $n:=\dim X$, where this denotes the maximum of the
dimensions of the irreducible components.  The assertion is immediate for
$n=0$.  Fix a weakly K\"ahler form $\omega_X$ on $X$.  Write
$X_1,\ldots,X_s$ for the positive-dimensional irreducible components of $X$ and
choose strong projective resolutions
\[
 \pi_i:X_i'\longrightarrow X_i.
\]
By Lemma~\ref{le-blowup}, every $X_i'$ is K\"ahler; fix a K\"ahler form
$\omega_i$ on it.  Let
\[
 D:=\operatorname{Sing}(X) 
\]
Thus $D\cap X_i$ is a proper analytic subset of $X_i$.

For each $i$, the class $\pi_i^*(\alpha|_{X_i})$ is nef.  Indeed, pulling
back a nef approximation on $X$ and using
$\pi_i^*\omega_X\leq C_i\omega_i$ gives the required approximation on
$X_i'$.  Moreover, if $n_i:=\dim X_i$, then
\[
 \int_{X_i'}\pi_i^*(\alpha|_{X_i})^{n_i}
   =\int_{X_i}\alpha^{n_i}>0.
\]
Theorem~\ref{thm:current}, applied separately to every component, therefore
gives
\[
 \widetilde T_i=\pi_i^*\theta+dd^c\widetilde u_i
       \geq2\varepsilon_i\omega_i,
\]
where $\varepsilon_i>0$ and $\widetilde u_i$ is smooth away from a proper
analytic subset $B_i\subset X_i'$ and has analytic singularities there.

Set
\[
 Z_i:=B_i\cup\operatorname{Exc}(\pi_i)
          \cup\pi_i^{-1}(D\cap X_i).
\]
By Lemma~\ref{le-Demailly-exten.lemma}, we can add a sufficiently small perturbation by
a quasi-psh function with logarithmic poles along $Z_i$ to $\widetilde u_i$ allow us to retain
the estimate $\widetilde T_i\geq\varepsilon_i\omega_i$ and arrange that
\[
 \widetilde u_i\longrightarrow-\infty\quad\text{along }Z_i.
\]
Put
\[
 A:=\bigcup_i\pi_i(Z_i).
\]
Observe that $A$ contains $D$.  By Remmert's theorem, $A$ is analytic, and $A\cap X_i$ is proper in every
$X_i$; in particular, $\dim A<n$.  The complement $X\setminus A$ is the
disjoint union of the smooth sets $X_i\setminus A$ (and possibly isolated
points).  Hence, by putting $u:= \widetilde u_i \circ \pi_i^{-1}$ on $X_i \backslash A$, we obtain  a smooth
function $u$ on $X\setminus A$.  Since there are only finitely many
components and $\pi_i^*\omega_X\leq C_i\omega_i$, after decreasing a single
constant $\varepsilon>0$ we have
\[
 \theta+\ddc u\geq\varepsilon\omega_X
 \quad\text{on }X\setminus A.
\]
The logarithmic poles also give
\[
 u(x)\longrightarrow-\infty\quad\text{as }x\longrightarrow A
\]
along every positive-dimensional component (note that we don't claim that u is quasi-psh on $X$, and we will not need this).  If $A=\varnothing$, this
already proves the proposition, so assume that $A\ne\varnothing$.

Recall that the restriction $\alpha|_A$ is still strongly nef, and condition $(*)$ remains valid for
every positive-dimensional irreducible analytic subset of $A$.  Since
$\dim A<n$, the induction hypothesis shows that $\alpha|_A$ contains a
weakly K\"ahler form.  If $\theta$ is locally the restriction of a closed smooth form under local embeddings, the induction
hypothesis gives a K\"ahler form on $A$.  Lemma~\ref{le-extension-DP} then
yields a neighborhood $U$ of $A$ and a smooth function $v$ on $U$ such that
\[
 \theta+\ddc v\geq\varepsilon\omega_X
 \quad\text{on }U,
\]
after shrinking $U$ and decreasing $\varepsilon$.

Choose an open set $U_0$ with
\[
 A\subset U_0\Subset U.
\]
Adding a constant to $v$ does not change $\ddc v$; hence, after subtracting a
sufficiently large constant, we may arrange that
\[
 u-v>2\delta
\]
on a neighborhood of $\partial U_0$, for some $0<\delta<1/2$.  Let
$\chi_\delta\colon\mathbb R\to\mathbb R$ be a smooth convex function such
that
\[
 0\leq\chi_\delta'\leq1,
 \qquad
 \chi_\delta(t)=0\text{ for }t\leq-\delta,
 \qquad
 \chi_\delta(t)=t\text{ for }t\geq\delta.
\]
On $U_0\setminus A$ set
\[
 w:=v+\chi_\delta(u-v).
\]
Because $u\to-\infty$ along $A$, we have $w=v$ near $A$.  Because
$u-v>2\delta$ near $\partial U_0$, we have $w=u$ there.  Thus $w$ extends to
a global smooth function on $X$ by setting it equal to $v$ near $A$ and to
$u$ on $X\setminus U_0$. Lemma \ref{le-noihaipsh}, applied to irreducible components of $X$, tells us that $w$ is $\theta$-psh and 
\[
 \theta+\ddc w\geq\varepsilon\omega_X
\]
as currents on every irreducible component of $X$.  Near $A$ the inequality was obtained in local ambient
embeddings by Lemma~\ref{le-extension-DP}, while outside $A$ the space is
smooth.  Thus $\theta+\ddc w$ is Hermitian in the sense fixed in Section~1,
not merely positive as a current.  It is closed, and hence is a weakly
K\"ahler form in $\alpha$.
If $\theta$ is locally the restriction of a closed smooth form under local embeddings, then
$\theta+\ddc w$ is a Kähler form because $\theta+ \ddc v$ and $\theta+\ddc u$ are so locally on $U$ and $X\backslash A$ respectively. The proof is finished.
\end{proof}

We now apply the proposition to the relevant cones.  Let
\[
 \widetilde{\K}
 \subset\widetilde H_{\mathrm{BC}}^{1,1}(X,\mathbb R)
\]
be the weakly K\"ahler cone, \emph{i.e.}, the cone of weakly Kähler classes in $\widetilde H_{\mathrm{BC}}^{1,1}(X,\mathbb R)$. Let
$\widetilde{\Pcone}$ be the cone of classes $\alpha \in \widetilde H_{\mathrm{BC}}^{1,1}(X,\mathbb R)$  such that 
 $$\int_Y\alpha^p>0$$
 for every positive-dimensional irreducible  $ Y\subset X,\ p=\dim Y$.
Inside $H_{\mathrm{BC}}^{1,1}(X,\mathbb R)$, let $\K$ be the K\"ahler cone
and put
\[
 \Pcone:=\widetilde{\Pcone}\cap H_{\mathrm{BC}}^{1,1}(X,\mathbb R).
\]

\begin{proof}[Proof of Theorem \ref{thm:main-intro}]
We prove first the weakly K\"ahler statement. 
Observe
$\widetilde{\K}\subset\widetilde{\Pcone}$, and
$\widetilde{\K}$ is non-empty, open and convex (see Lemma \ref{le-convexKähler}).  It remains to prove that it is closed relative to $\widetilde{\Pcone}$.
Let
\[
 \alpha\in\overline{\widetilde{\K}}\cap\widetilde{\Pcone}.
\]
Being a limit of weakly K\"ahler classes, $\alpha$ is strongly nef. 
Since
$\alpha\in\widetilde{\Pcone}$, it satisfies the numerical hypothesis of
Proposition~\ref{prop:ambient-induction}.  That proposition shows that
$\alpha$ is weakly K\"ahler.  Therefore
\[
 \overline{\widetilde{\K}}\cap\widetilde{\Pcone}
 =\widetilde{\K}.
\]
Hence $\widetilde{\K}$ is both open and closed in
$\widetilde{\Pcone}$ and is therefore a union of connected components.  Its
convexity makes it connected, so it is exactly one component.

If $X$ is K\"ahler, then $\K$ is nonempty, open, and convex.  The final
assertion of Proposition~\ref{prop:ambient-induction} gives
$\overline{\K}\cap\Pcone=\K$, so the same argument shows that $\K$ is one
connected component of $\Pcone$.
\end{proof}

\begin{remark} \label{re-conePkhacrong} The proof of Theorem \ref{thm:main-intro} also proves that for every  compact weakly Kähler  space $X$, if $\mathcal{P} \not =\varnothing$, then $\mathcal{K} \not = \varnothing$ and $\mathcal{K}$ is a connected component of $\mathcal{P}$.
\end{remark}

We record the following useful mixed-intersection form.

\begin{corollary}\label{prop:mixed}
Let $X$ be a compact weakly K\"ahler space, 
fix a weakly K\"ahler class
$\omega$, and let
$\alpha\in\widetilde H_{\mathrm{BC}}^{1,1}(X,\mathbb R)$ and
assume that
\[
 \int_Y\alpha^k\wedge\omega^{p-k}>0,
 \qquad 1\leq k\leq p=\dim Y,
\]
for every positive-dimensional irreducible analytic subset $Y\subset X$.
Then $\alpha$ is weakly K\"ahler.  If $\alpha$ has smooth local
$\ddc$-potentials and $\omega$ is a K\"ahler class, then $\alpha$ is
K\"ahler.
\end{corollary}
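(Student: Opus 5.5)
The approach follows the Demailly--P\u{a}un argument for the mixed version: connect $\alpha$ to $\omega$ by the segment $\alpha_t:=t\alpha+(1-t)\omega$ for $t\in[0,1]$, and apply Theorem~\ref{thm:main-intro} together with Proposition~\ref{prop:ambient-induction}. First we check that every $\alpha_t$ lies in $\widetilde{\Pcone}$. For each positive-dimensional irreducible $Y$ with $p=\dim Y$, expanding gives
\[
 \int_Y\alpha_t^p=\sum_{k=0}^{p}\binom{p}{k}t^k(1-t)^{p-k}\int_Y\alpha^k\wedge\omega^{p-k}.
\]
Every term is nonnegative. The $k=0$ term is $\int_Y\omega^p>0$, since $\omega$ is weakly K\"ahler; the terms with $k\ge1$ are positive by hypothesis. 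Hence for $t\in[0,1)$ the $k=0$ term gives strict positivity, while for $t=1$ the $k=p$ term does. So $\alpha_t\in\widetilde{\Pcone}$ for all $t\in[0,1]$.

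Next, the map $t\mapsto\alpha_t$ is continuous into $\widetilde H^{1,1}_{\mathrm{BC}}(X,\mathbb R)$. The segment is therefore a connected subset of $\widetilde{\Pcone}$ containing $\alpha_0=\omega\in\widetilde{\K}$. By Theorem~\ref{thm:main-intro}, $\widetilde{\K}$ is a connected component of $\widetilde{\Pcone}$, so the whole segment lies in $\widetilde{\K}$, and in particular $\alpha=\alpha_1$ is weakly K\"ahler.

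To avoid any subtlety about components in a possibly non-Hausdorff topology, I would instead argue directly. Let $t_0:=\sup\{t\in[0,1]: \alpha_s\in\widetilde{\K}\text{ for all }s\le t\}$; this is positive since $\widetilde{\K}$ is open. Then $\alpha_{t_0}$ is a limit of weakly K\"ahler classes, so it is strongly nef, and it lies in $\widetilde{\Pcone}$. Proposition~\ref{prop:ambient-induction} then shows $\alpha_{t_0}\in\widetilde{\K}$, and openness forces $t_0=1$. The openness of $\widetilde{\K}$ along the segment is easy: if $\theta_t$ is a weakly K\"ahler form in $\alpha_t$, then $\theta_t+s(\theta_\alpha-\theta_\omega)$ remains Hermitian for small $|s|$ by compactness, using Lemma~\ref{le-kählerkophuthuocvaolocal} to extend locally to ambient spaces.

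For the K\"ahler statement, take $\omega$ represented by a K\"ahler form and $\alpha$ with smooth local potentials. Then every $\alpha_t$ has a representative with local potentials, i.e.\ $\alpha_t\in\Pcone$. The same continuity argument applies, now using the final assertion of Proposition~\ref{prop:ambient-induction} (which yields a K\"ahler form once the class is strongly nef with local potentials) and the openness of $\K$ along the segment. The only mildly delicate step is this openness, i.e.\ that small perturbations of a (weakly) K\"ahler form by $s(\theta_\alpha-\theta_\omega)$ stay (weakly) K\"ahler. This follows from the local ambient extension in Lemma~\ref{le-kählerkophuthuocvaolocal} combined with compactness of $X$, and in the K\"ahler case from the fact that closed ambient extensions of forms with local potentials can be chosen.
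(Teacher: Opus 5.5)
Your proof is correct and is essentially the paper's argument. The paper does the same binomial-expansion positivity check along the ray $\alpha+t\omega$, $t\geq 0$, starting from $t\gg1$ where the class is weakly K\"ahler; this is your segment $t\alpha+(1-t)\omega$ up to positive rescaling, and both conclude via Theorem~\ref{thm:main-intro}. Your supremum argument via Proposition~\ref{prop:ambient-induction} just unpacks that theorem's proof along the segment, and it is optional, since the connected-component argument holds in any topological space, Hausdorff or not.
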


\begin{proof}
For every $t\geq0$ and every irreducible $p$-dimensional analytic subset
$Y$, we have
\[
 \int_Y(\alpha+t\omega)^p
 =\sum_{k=0}^p\binom pk t^{p-k}
   \int_Y\alpha^k\wedge\omega^{p-k}>0.
\]
For \(t>0\), the \(k=0\) term is strictly positive, while for
\(t=0\), the \(k=p\) term is strictly positive. Hence the displayed
integral is positive for every \(t\geq0\).
Thus the entire ray $\alpha+t\omega$, $t\geq0$, lies in
$\widetilde{\Pcone}$.  For $t\gg1$, the class $\alpha+t\omega$ is weakly
K\"ahler.  Since the ray is connected, Theorem~\ref{thm:main-intro} implies that
it remains in the weakly K\"ahler component down to $t=0$.  Hence $\alpha$
is weakly K\"ahler.

If $\alpha$ has local potentials and $\omega$ is a K\"ahler class, then the
ray lies in $\Pcone$, and for
$t\gg1$ it consists of K\"ahler classes.  The K\"ahler part of
Theorem~\ref{thm:main-intro} therefore gives $\alpha\in\K$.
\end{proof}

\begin{lemma} \label{le-Smsoothpositiveascurrents} Let $X,\omega$ be as in Corollary \ref{prop:mixed}. Assume furthermore that $X$ is pure-dimensional. 
  Let $S$ be a smooth $d$-closed real $(1,1)$-form on $X$ which is
positive as a current.  Then, for every irreducible analytic subset
$Y\subset X$ of dimension $p$ and every $0\leq k\leq p$, one has
\begin{equation}\label{eq:smooth-positive-mixed}
  \int_Y S^k\wedge\omega^{p-k}\geq 0.
\end{equation}
\end{lemma}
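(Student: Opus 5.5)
The plan is to reduce to a smooth compact Kähler manifold by resolving $Y$, and then use pointwise positivity of smooth forms there.

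First, take a desingularization $\pi\colon Y'\to Y$ obtained by successive blowups, as in Lemma~\ref{le-blowup}. Write $\iota\colon Y\hookrightarrow X$ for the inclusion and set $f:=\iota\circ\pi$. Because $Y$ is irreducible, $\pi$ is biholomorphic over the dense Zariski open set $\Reg(Y)$. The integral $\int_Y S^k\wedge\omega^{p-k}$ is understood as integration over $\Reg(Y)$, or equivalently as pairing with the current $[Y]$. Since $f^*S$ and $f^*\omega$ are smooth on the compact manifold $Y'$, and the exceptional set has measure zero, we get
\[
\int_Y S^k\wedge\omega^{p-k}=\int_{Y'} (f^*S)^k\wedge (f^*\omega)^{p-k}.
\]
If the class $\omega$ is only given as a class, fix a weakly Kähler representative $\omega_X$. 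The integral depends only on the classes: the forms are $d$-closed, $Y'$ is compact, and changing a representative by $\ddc g$ changes the integrand by an exact form.

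The key step is to show that $f^*S$ is a smooth semipositive $(1,1)$-form on $Y'$, pointwise. Here positivity is only given as a current on $X$, and $Y$ may sit inside $\Sing(X)$, so it is not a priori a pointwise statement along $Y$. I would argue as follows. On $\Reg(X)$, which is dense because $X$ is pure-dimensional, a smooth form that is positive as a current is pointwise semipositive. Near a singular point, choose a local embedding $X\cap U\hookrightarrow W\subset\C^N$ and a smooth extension $\widetilde S$ of $S$. At every regular point $x$ of $X$, the restriction of $\widetilde S$ to $T_xX$ is semipositive. By continuity, $\widetilde S$ is semipositive on every limit of tangent spaces $\lim T_{x_j}X$, that is, on the Whitney cone $C_4$ of $X$. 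Every tangent vector to $Y$ at a regular point of $Y$, and more generally every vector in the image $df(T_{y'}Y')$, lies in such a limit. This follows because vectors tangent to an analytic subset of $X$ belong to the cone of limits of tangent planes (Whitney's tangent cone $C_5\supset$ Zariski-type tangents along curves in $X$). One can also argue by curve selection: for any holomorphic disc $\gamma$ in $X$, the form $\gamma^*S$ is semipositive, since it is approximated by discs through regular points. Applying this to discs in $Y'$ gives $f^*S\ge0$ at points of $f^{-1}(\Reg(Y))$, and by density and continuity on all of $Y'$.

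This step is the main obstacle. I expect the cleanest rigorous route to be this: for $y\in\Reg(Y)$ and $v\in T_yY$, realize $v$ as a limit of tangent vectors $v_j\in T_{x_j}X$ with $x_j\in\Reg(X)$, which is possible since $\Reg(X)$ is dense and the Whitney cone contains all tangent vectors to $X$. Then $\widetilde S(v,Jv)=\lim \widetilde S(v_j,Jv_j)\ge0$. Similarly, $f^*\omega_X\ge0$ pointwise, since it is a pullback of a Hermitian form.

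Once both pulled-back forms are smooth and semipositive on $Y'$, the wedge product $(f^*S)^k\wedge(f^*\omega_X)^{p-k}$ is a nonnegative top-degree form pointwise. This holds because products of semipositive $(1,1)$-forms are positive, which is clear by simultaneous diagonalization in the case $p-k$ copies of one form, and in general by multilinear positivity of mixed products of semipositive $(1,1)$-forms. Integrating over $Y'$ then gives \eqref{eq:smooth-positive-mixed}.
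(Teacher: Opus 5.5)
Your route is valid in outline but differs from the paper's, and the justification you give for the key step is wrong as stated. The claim that ``the Whitney cone contains all tangent vectors to $X$'' fails for Zariski tangent vectors. For the cusp $\{z_2^2=z_1^3\}\subset\C^2$, the Zariski tangent space at $0$ is $\C^2$, while the only limit of tangent lines is $\{z_2=0\}$. Invoking $C_5$ does not help: $C_5\supseteq C_4$ is in general strictly larger, and semipositivity of $\widetilde S$ is only known on $C_4$. The disc-approximation variant is asserted without proof.

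What you actually need, and what is true, is
\[
T_yY\subset C_3(X,y)\subset C_4(X,y)\qquad (y\in\Reg(Y)),
\]
where $C_3$ is the tangent cone and $C_4$ the union of limits of tangent spaces at points of $\Reg(X)$. The inclusion $C_3\subset C_4$ is a classical result of Whitney; it can be proved by curve selection plus Whitney's condition (a). With it, each $v\in T_yY$ lies in some $\lim T_{x_j}X$, and your continuity argument goes through.

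You can also avoid Whitney entirely. Resolve an irreducible component $X_i\supset Y$ by $\pi\colon X_i'\to X_i$. Then $\pi^*S\geq0$ on all of $X_i'$, by density of $\pi^{-1}(X_i\cap\Reg(X))$ and continuity. Take a component $F$ of $\pi^{-1}(Y)$ with $\pi(F)=Y$. At points $y'\in\Reg(F)\cap\pi^{-1}(\Reg(Y))$ where $\pi|_F$ has rank $p$, one has $d\pi(T_{y'}F)=T_{\pi(y')}Y$. Such images are dense in $Y$, so $S|_{T_yY}\geq0$ on a dense subset of $\Reg(Y)$, hence everywhere on $\Reg(Y)$.

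The paper uses exactly this ambient resolution but never returns to pointwise positivity on $Y$. It proves $\int_F(\pi^*S)^k\wedge(\pi^*\omega)^{p-k}\wedge\Omega^r\geq0$, with $r=\dim F-p$ and $\Omega$ a K\"ahler form on $X_i'$ from Lemma~\ref{le-blowup}. It then transfers this to $Y$ via $\pi_*([F]\wedge\Omega^r)=c[Y]$ with $c>0$, using the support theorem.

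The paper thus trades a local fact about limits of tangent spaces for a global resolution, a K\"ahler form upstairs, and a pushforward identity of currents. Your route, once repaired, gives the stronger pointwise statement that $S|_{\Reg(Y)}$ is semipositive for every analytic $Y\subset X$. The inequality then follows immediately on any resolution of $Y$, with no currents and no K\"ahler form upstairs.
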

 We recall that on a singular space, positivity in the sense of
currents cannot simply be replaced by pointwise positivity in an ambient
embedding. 

\begin{proof}
Choose an irreducible component $X_i$ containing $Y$ and a strong
projective resolution $\pi\colon X_i'\to X_i$. Since $S$ is smooth on $X$, its restriction to $X_i$ is also smooth. It follows that $\pi^* S$ is smooth on $X_i'$. On the other hand, by the semi-positivity of $S$ on $X_i \cap \Reg(X)$, we also have that
$\pi^*S$ is semipositive on $\pi^{-1}(X_i \cap \Reg(X))$, which is dense in $X_i'$. Thus $\pi^* S$ is semipositive on all of $X_i'$ by
continuity.  The form $\pi^*\omega$ is semipositive as well.  Let $F$ be an
irreducible component of $\pi^{-1}(Y)$ whose image under $\pi$ is equal to $Y$. Put
$r:=\dim F-p$, and choose a K\"ahler form $\Omega$ on $X_i'$; such a form
exists by Lemma~3.3.  The current
\[
  \pi_*\bigl([F]\wedge\Omega^r\bigr)
\]
is a positive closed current of bidimension $(p,p)$ supported on the
irreducible $p$-dimensional set $Y$.  The support theorem, applied to local embeddings of $X$, gives
\[
  \pi_*\bigl([F]\wedge\Omega^r\bigr)=c[Y]
\]
for a constant $c>0$; positivity of $c$ follows by integrating $\Omega^r$
on a generic fiber of $F\to Y$.  It follows that
\[
  c\int_Y S^k\wedge\omega^{p-k}
   =\int_F (\pi^*S)^k\wedge(\pi^*\omega)^{p-k}\wedge\Omega^r\geq 0,
\]
which proves \eqref{eq:smooth-positive-mixed}.
\end{proof}

\begin{corollary}\label{cor:03}
Let $X$ be a compact weakly K\"ahler space of pure dimension, and let
$\alpha\in\widetilde H_{\mathrm{BC}}^{1,1}(X,\mathbb R)$.  Then
$\alpha$ is nef if and only if there exists a weakly K\"ahler class $\omega$
such that
\[
 \int_Y\alpha^k\wedge\omega^{p-k}\geq0
\]
for every irreducible analytic subset $Y\subset X$, $p=\dim Y$, and every
$1\leq k\leq p$.
\end{corollary}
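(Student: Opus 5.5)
The equivalence splits into two directions; I would treat the easy one first.

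\emph{Nef $\Rightarrow$ numerical inequalities.} Let $\alpha=\{\theta\}$ be nef and fix any weakly K\"ahler form $\omega$ on $X$, with class also denoted $\omega$. For each $\varepsilon>0$, nefness gives $f_{\varepsilon}$ with
\[
S_\varepsilon:=\theta+\varepsilon\omega+\ddc f_{\varepsilon}\geq 0
\]
as currents. This $S_\varepsilon$ is a smooth $d$-closed real $(1,1)$-form that is positive as a current. Lemma~\ref{le-Smsoothpositiveascurrents} then gives
\[
\int_Y S_\varepsilon^k\wedge\omega^{p-k}\geq 0
\]
for every irreducible $Y$ with $p=\dim Y$. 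These integrals are cohomological: $\ddc$-exact terms integrate to zero on $Y$ against closed forms, by Stokes on a resolution as in the proof of Lemma~\ref{le-Smsoothpositiveascurrents}. So the left side equals $\int_Y(\alpha+\varepsilon\omega)^k\wedge\omega^{p-k}$, which is a polynomial in $\varepsilon$. Letting $\varepsilon\to0$ gives the required inequalities.

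\emph{Numerical inequalities $\Rightarrow$ nef.} Assume the inequalities hold for some weakly K\"ahler class $\omega$. Fix $t>0$ and set $\alpha_t:=\alpha+t\omega$. For every irreducible $Y$ of dimension $p$ and every $1\le k\le p$,
\[
\int_Y\alpha_t^k\wedge\omega^{p-k}=\sum_{j=0}^k\binom kj t^{k-j}\int_Y\alpha^j\wedge\omega^{p-j}.
\]
The $j=0$ term is $t^k\int_Y\omega^p>0$, because $\omega$ restricts to a Hermitian form on $\Reg(Y)$. The remaining terms are $\ge0$ by hypothesis. Hence Corollary~\ref{prop:mixed} applies to $\alpha_t$ and shows that $\alpha_t$ is weakly K\"ahler. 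Thus for every $t>0$ there is a weakly K\"ahler form in $\alpha+t\omega$. If $\omega_0$ is a form in the class $\omega$, this says there are smooth $g_t$ with $\theta+t\omega_0+\ddc g_t\geq 0$. Since $\omega_0$ is Hermitian, this is exactly the nef condition, applied with the Hermitian form $\omega_0$ and $\varepsilon=t$.

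\emph{Main obstacle.} The delicate point is the first direction's claim that the intersection numbers depend only on Bott--Chern classes in $\widetilde H^{1,1}_{\mathrm{BC}}$. Here forms need not have local potentials, so one must pull back to a resolution of $Y$, or of the component containing it, and use Stokes there; the pullback of $\ddc f$ is $\ddc(f\circ\pi)$, which is exact. A second, minor check: nefness is stated relative to a Hermitian $\omega$, and any two Hermitian forms on compact $X$ are mutually bounded. So the choice of the weakly K\"ahler form $\omega_0$ in the converse direction is harmless.
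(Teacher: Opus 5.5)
Your proposal is correct and follows essentially the same route as the paper. For the forward direction you use the nef approximants, Lemma~\ref{le-Smsoothpositiveascurrents} and the limit $\varepsilon\to0$; for the converse you use the binomial expansion, whose $j=0$ term is strictly positive, and apply Corollary~\ref{prop:mixed} to $\alpha+t\omega$. You also make explicit two points the paper leaves implicit: that the intersection numbers depend only on the class, and that the nef condition does not depend on the choice of Hermitian form.
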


\begin{proof}
Assume first that $\alpha$ is nef, and fix a weakly K\"ahler class $\omega$.
For every $\varepsilon>0$, the class
$\alpha+\frac{\varepsilon}{2} \omega$ contains a smooth closed form which is positive as currents on $X$. Lemma \ref{le-Smsoothpositiveascurrents} implies
\[
 \int_Y(\alpha+\varepsilon\omega)^k\wedge\omega^{p-k}>0.
\]
Letting $\varepsilon\downarrow0$ gives the required nonnegative
inequalities.

Conversely, assume the inequalities.  For every $\varepsilon>0$ and
$1\leq k\leq p$,
\[
 \int_Y(\alpha+\varepsilon\omega)^k\wedge\omega^{p-k}
 =\sum_{j=0}^k\binom kj\varepsilon^{k-j}
   \int_Y\alpha^j\wedge\omega^{p-j}>0,
\]
because all terms are nonnegative and the $j=0$ term is strictly positive.
Corollary~\ref{prop:mixed} shows that $\alpha+\varepsilon\omega$ is
weakly K\"ahler for every $\varepsilon>0$.  Therefore, $\alpha$ is nef.
\end{proof}

\begin{corollary}\label{cor:04}
Let $X$ be a compact weakly K\"ahler space  of pure dimension, and let
$\alpha\in\widetilde H_{\mathrm{BC}}^{1,1}(X,\mathbb R)$.  Then
$\alpha$ is nef if and only if, for every irreducible analytic subset
$Y\subset X$ of dimension $p$ and every weakly K\"ahler class $\omega$,
\[
 \int_Y\alpha\wedge\omega^{p-1}\geq0.
\]
\end{corollary}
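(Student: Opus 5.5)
The necessity direction is immediate from Lemma~\ref{le-Smsoothpositiveascurrents}. If $\alpha$ is nef and $\omega$ is a weakly K\"ahler class, then for every $\varepsilon>0$ the class $\alpha+\varepsilon\omega$ contains a smooth closed form positive as a current. That lemma, with $k=1$ and the weakly K\"ahler form representing $\omega$, gives $\int_Y(\alpha+\varepsilon\omega)\wedge\omega^{p-1}\ge 0$. Letting $\varepsilon\downarrow 0$ yields the inequality.

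For sufficiency I plan to reduce to Corollary~\ref{cor:03}: it suffices to produce one weakly K\"ahler class $\omega_0$ with
\[
\int_Y\alpha^k\wedge\omega_0^{p-k}\ge 0
\]
for all $Y$ and all $1\le k\le p$. I will do this by showing directly that $\alpha+\varepsilon\omega_0$ is weakly K\"ahler for each $\varepsilon>0$. Fix a weakly K\"ahler class $\omega_0$ and $\varepsilon>0$, and put $\beta_t:=\alpha+t\omega_0$ for $t\ge\varepsilon$. The key observation is that the hypothesis is applied to the varying class $\omega=\beta_t$ whenever $\beta_t$ is already known to be weakly K\"ahler.

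I argue by a continuity argument along the ray. Let $t_0:=\inf\{t\ge \varepsilon: \beta_s \text{ is weakly K\"ahler for all } s\ge t\}$; this set is nonempty since $\beta_t$ is weakly K\"ahler for $t\gg1$ (openness of the weakly K\"ahler cone plus scaling). Suppose $t_0>\varepsilon$. For $s>t_0$ the class $\beta_s$ is weakly K\"ahler, so the hypothesis with $\omega=\beta_s$ gives $\int_Y\alpha\wedge\beta_s^{p-1}\ge0$. Hence
\[
\int_Y\beta_s^p=\int_Y\alpha\wedge\beta_s^{p-1}+s\int_Y\omega_0\wedge\beta_s^{p-1}\ge s\int_Y\omega_0\wedge\beta_s^{p-1}.
\]
Letting $s\downarrow t_0$ by continuity of the polynomial gives $\int_Y\beta_{t_0}^p\ge t_0\int_Y\omega_0\wedge\beta_{t_0}^{p-1}$. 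This is where the main obstacle lies: I need strict positivity at $t_0$. I plan to apply the same trick to $\beta_{t_0-\eta}$ for small $\eta$, or more cleanly to the intermediate mixed terms.

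Concretely, I expand $\int_Y\beta_s^{j}\wedge\omega_0^{p-j}$ for every $j$. By Lemma~\ref{le-Smsoothpositiveascurrents}, applied to the weakly K\"ahler forms in $\beta_s$ and $\omega_0$, all mixed integrals $\int_Y\beta_s^{a}\wedge\omega_0^{p-a}$ are $\ge0$ for $s>t_0$, and hence by limits at $s=t_0$. Writing $\beta_{t_0}=\beta_{t_0-\varepsilon/2}+\frac{\varepsilon}{2}\omega_0$, I aim to show inductively in $j$ that $\int_Y\beta_{t_0}^j\wedge\omega_0^{p-j}>0$. The term $j=0$ is $\int_Y\omega_0^p>0$. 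For the inductive step, the identity above (with $\omega_0^{p-j}$ inserted, using the hypothesis restricted via the class $\beta_s$ and Lemma~\ref{le-Smsoothpositiveascurrents}) gives
\[
\int_Y\beta_{t_0}^{j}\wedge\omega_0^{p-j}\ge t_0\int_Y\beta_{t_0}^{j-1}\wedge\omega_0^{p-j+1}>0.
\]
If the hypothesis only controls $\alpha\wedge\omega^{p-1}$ and not $\alpha\wedge\beta^{j-1}\wedge\omega_0^{p-j}$, I will instead use $\omega=\beta_s+\lambda\omega_0$ and extract coefficients in $\lambda$, since the inequality holds for every $\lambda\ge0$ and a nonnegative polynomial in $\lambda$ has a nonnegative leading and constant coefficient. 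This coefficient extraction is the delicate point.

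Granting the strict inequalities, Corollary~\ref{prop:mixed} applied with the class $\omega_0$ shows that $\beta_{t_0}$, and by openness $\beta_{t}$ for $t$ slightly below $t_0$, is weakly K\"ahler. This contradicts minimality, so $t_0=\varepsilon$, and the same argument at $\varepsilon$ shows $\alpha+\varepsilon\omega_0$ is weakly K\"ahler. Since $\varepsilon>0$ was arbitrary, $\alpha$ is nef.
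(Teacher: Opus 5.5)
Your necessity argument matches the paper's, and the bootstrap along the ray $\beta_t=\alpha+t\omega_0$ is a sound framework. The gap is the step that is supposed to give strict positivity at $t_0$.

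For $2\le j\le p-1$, your inductive inequality $\int_Y\beta_{t_0}^{j}\wedge\omega_0^{p-j}\ge t_0\int_Y\beta_{t_0}^{j-1}\wedge\omega_0^{p-j+1}$ is equivalent to $\int_Y\alpha\wedge\beta_{t_0}^{j-1}\wedge\omega_0^{p-j}\ge0$. Nothing available gives this. Lemma~\ref{le-Smsoothpositiveascurrents} only handles classes containing positive forms, and $\alpha$ is not known to be one.

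The coefficient extraction does not supply it either. Since $\beta_s+\lambda\omega_0=\beta_{s+\lambda}$, testing the hypothesis on $\omega=\beta_s+\lambda\omega_0$ only says that the single polynomial $h(s):=\int_Y\alpha\wedge\beta_s^{p-1}$ is $\ge0$ on $(t_0,\infty)$. The mixed numbers you need are the intermediate coefficients of $\lambda\mapsto h(s+\lambda)$. Nonnegativity on a half-line controls only the constant and leading coefficients, which are the cases $\omega=\beta_s$ and $\omega=\omega_0$ you already have; the middle ones can be negative, as $(\lambda-1)^2$ shows. For $p=3$, the condition that $h(s)=\int_Y\alpha^3+2s\int_Y\alpha^2\wedge\omega_0+s^2\int_Y\alpha\wedge\omega_0^2$ is $\ge0$ on $(t_0,\infty)$ is compatible with $\int_Y\alpha^2\wedge\omega_0<0$. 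So the sign of $\int_Y\alpha\wedge\beta_{t_0}\wedge\omega_0$ needed at $j=2$ is not available at that stage; it only follows a posteriori, once $\alpha$ is known to be nef.

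Without the induction you are left with $\int_Y\beta_{t_0}^p\ge t_0\int_Y\omega_0\wedge\beta_{t_0}^{p-1}$, whose right-hand side is only known to be $\ge0$. So neither $\beta_{t_0}\in\widetilde{\Pcone}$ nor the strict mixed inequalities required by Corollary~\ref{prop:mixed} follow.

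The missing idea is to integrate your inequality along the ray rather than evaluate it at one point. With $f(s):=\int_Y\beta_s^p$ one has $f'(s)=p\int_Y\omega_0\wedge\beta_s^{p-1}$, and your inequality for $s>t_0$ reads
\[
 \frac{d}{ds}\bigl(s^{-p}f(s)\bigr)=-p\,s^{-p-1}\int_Y\alpha\wedge\beta_s^{p-1}\le0 .
\]
Hence $s^{-p}f(s)\ge\lim_{S\to\infty}S^{-p}f(S)=\int_Y\omega_0^p$. Letting $s\downarrow t_0$ gives $\int_Y\beta_{t_0}^{p}\ge t_0^p\int_Y\omega_0^p>0$ for every positive-dimensional irreducible $Y$.

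As a limit of the weakly K\"ahler classes $\beta_s$, the class $\beta_{t_0}$ is strongly nef. So Proposition~\ref{prop:ambient-induction} (equivalently $\overline{\widetilde{\K}}\cap\widetilde{\Pcone}=\widetilde{\K}$ from the proof of Theorem~\ref{thm:main-intro}) makes it weakly K\"ahler using top self-intersections only, with no mixed terms and no appeal to Corollary~\ref{prop:mixed}. Openness of $\widetilde{\K}$ then forces $t_0=\varepsilon$, and the same bound at $\varepsilon$ finishes the proof.

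This is the estimate $\int_Y(\alpha+(1-\delta)\omega)^p\ge(1-\delta)^p\int_Y\omega^p$ that the paper obtains from the Demailly--P\u{a}un polynomial identity. The paper integrates the hypothesis over the weakly K\"ahler segment $(1-t)\omega+t\omega'$ and then iterates with $r^\nu\omega$. The differential-inequality version avoids the polynomials $A_p$, but it needs your continuity argument in place of the fixed step $\delta_0$.
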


\begin{proof}
Necessity follows by applying positivity to
$\alpha+\varepsilon\omega$, Lemma \ref{le-Smsoothpositiveascurrents} and letting $\varepsilon\downarrow0$.

For sufficiency, we follow the polynomial argument of
\cite[Theorem~4.4]{DP04}. By the latter reference,  for each $p\geq1$ there is a polynomial
$A_p(t,\delta)$, depending continuously on $\delta$ and of degree at most
$p-1$ in $t$, such that
\begin{equation}\label{eq:poly}
 (y-\delta x)^p-(1-\delta)^p x^p
 =(y-x)\int_0^1 A_p(t,\delta)
       ((1-t)x+ty)^{p-1}\,dt.
\end{equation}
Moreover, there is
$\delta_0\in(0,1)$ such that
\[
 A_p(t,\delta)>0
\]
for $0\leq t\leq1$, $0\leq\delta\leq\delta_0$, and all  $p\le n$.

Choose a sufficiently large positive multiple $\omega$ of a fixed weakly
K\"ahler class so that
$\omega':=\alpha+\omega$ is weakly K\"ahler.  Substituting
$x=\omega$ and $y=\omega'$ into \eqref{eq:poly}, we obtain, for every
irreducible $p$-dimensional analytic subset $Y$,
\begin{align*}
 &\int_Y(\alpha+(1-\delta)\omega)^p
 -(1-\delta)^p\int_Y\omega^p\\
 &\quad=\int_0^1 A_p(t,\delta)
   \left(\int_Y\alpha\wedge
   ((1-t)\omega+t\omega')^{p-1}\right)dt\geq0.
\end{align*}
The inequality follows from the hypothesis, since
$(1-t)\omega+t\omega'$ is weakly K\"ahler.  Hence
$\alpha+(1-\delta)\omega\in\widetilde{\Pcone}$ for
$0\leq\delta\leq\delta_0$.  This segment starts at the weakly K\"ahler class
$\alpha+\omega$, so Theorem~\ref{thm:main-intro} implies that
\[
 \alpha+r\omega\in\widetilde{\K},
 \qquad r:=1-\delta_0<1.
\]
Repeating the same argument with $r^\nu\omega$ in place of $\omega$ gives
\[
 \alpha+r^\nu\omega\in\widetilde{\K}
 \qquad(\nu=0,1,2,\ldots).
\]
These classes converge to $\alpha$, and therefore $\alpha$ is nef.
\end{proof}

To finish this section, we present now proofs of Corollary \ref{cor-main-equalitycone} and Theorem \ref{thm-projective}.

\begin{proof}[Proof of Corollary \ref{cor-main-equalitycone}] 
Fix a weakly K\"ahler form $\omega$ on $X$. 
The inclusion $\widetilde{\mathcal K}\subset
\widetilde{\mathcal K}^{\prime\prime}$ was already noted.  Conversely, let
$\alpha\in\widetilde{\mathcal K}^{\prime\prime}$ and choose a smooth
representative $T$ of $\alpha$ which is a K\"ahler current.  After decreasing
the positivity constant, we may write
\[
  T\geq\delta\omega
\]
for some $\delta>0$.  Put
\[
  S:=T-\delta\omega\geq 0,
  \qquad
  \beta:=\alpha-\delta\{\omega\}=\{S\}.
\]
By Lemma \ref{le-Smsoothpositiveascurrents}, we have
\[
  \int_Y\beta^k\wedge\omega^{p-k}\geq 0
\]
for every irreducible $p$-dimensional analytic subset $Y\subset X$ and every
$1\leq k\leq p$.  Corollary \ref{cor:03} shows that $\beta$ is nef.  Applying the same
corollary to $\beta$ and expanding the binomial, we obtain
\[
\begin{split}
  \int_Y\alpha^k\wedge\omega^{p-k}
   &=\sum_{j=0}^k\binom{k}{j}\delta^{k-j}
       \int_Y\beta^j\wedge\omega^{p-j} \\
   &>0.
\end{split}
\]
Here every summand is nonnegative, while the term with $j=0$ equals
$\delta^k\int_Y\omega^p>0$.  Corollary \ref{prop:mixed} therefore implies that $\alpha$
is weakly K\"ahler.  Consequently,
\[
  \widetilde{\mathcal K}
   =\widetilde{\mathcal K}^{\prime\prime}.
\]

We next have the evident inclusions
\[
  \mathcal K
  \subset
  \widetilde{\mathcal K}'\cap H^{1,1}_{\mathrm{BC}}(X,\mathbb R)
  \subset
  \widetilde{\mathcal K}^{\prime\prime}
       \cap H^{1,1}_{\mathrm{BC}}(X,\mathbb R).
\]
Let $\alpha$ belong to the last cone.  By the equality just proved,
$\alpha$ is weakly K\"ahler.  Hence it is nef and
$\int_Y\alpha^{\dim Y}>0$ for every positive-dimensional irreducible
analytic subset $Y\subset X$.  Since $\alpha$ belongs to Bott--Chern
cohomology, it has a smooth representative with local $dd^c$-potentials.
The last assertion of Proposition \ref{prop:ambient-induction} then shows that $\alpha$ contains a
K\"ahler form.  Thus
\[
  \mathcal K
  =\widetilde{\mathcal K}'\cap H^{1,1}_{\mathrm{BC}}(X,\mathbb R)
  =\widetilde{\mathcal K}^{\prime\prime}
       \cap H^{1,1}_{\mathrm{BC}}(X,\mathbb R).
\]
Finally, let $\alpha=\{\theta\}$ be nef.  For every $\varepsilon>0$ there is
$f_\varepsilon\in C^\infty(X,\mathbb R)$ such that
\[
  \theta+dd^cf_\varepsilon\geq-\varepsilon\omega.
\]
It follows that
\[
  \theta+dd^cf_\varepsilon+2\varepsilon\omega
     \geq\varepsilon\omega
\]
is a smooth K\"ahler current in
$\alpha+2\varepsilon\{\omega\}$.  The first equality above shows that these
classes are weakly K\"ahler and converge to $\alpha$.  If $X$ is K\"ahler,
choose $\omega$ K\"ahler.  For $\alpha\in H^{1,1}_{\mathrm{BC}}(X,\mathbb R)$
the same classes belong to Bott--Chern cohomology, and the second equality
shows that they are K\"ahler.  This proves both approximation assertions.
\end{proof}

\begin{proof}[Proof of Theorem \ref{thm-projective}]
Let $A$ be a very ample line bundle on $X$, and let
$\omega=c_1(A,h)$ be the K\"ahler form obtained by restricting a
Fubini--Study metric.  By Chow's theorem, every irreducible analytic subset
of $X$ is algebraic.  Let $Y\subset X$ be irreducible of dimension $p$, fix
$1\leq k\leq p$, and choose general members
$H_1,\ldots,H_{p-k}\in|A|$ which meet $Y$ properly.  Write the resulting
effective cycle as
\[
  Y\cdot H_1\cdots H_{p-k}=\sum_{\ell}m_\ell Z_\ell,
  \qquad m_\ell>0,
\]
where every $Z_\ell$ is irreducible of dimension $k$.  For every smooth
$d$-closed real $(1,1)$-form representing a class $\alpha\in
\widetilde H^{1,1}_{\partial\bar\partial}(X,\mathbb R)$, the projection
formula gives
\begin{equation}\label{eq:hyperplane-mixed}
  \int_Y\alpha^k\wedge\omega^{p-k}
     =\sum_\ell m_\ell\int_{Z_\ell}\alpha^k.
\end{equation}
The identity may equivalently be checked after pulling everything back to a
resolution of $Y$; hence it is valid without any smoothness or normality
assumption on $X$.

Suppose first that $\alpha\in\widetilde{\mathcal P}$.  Every term on the
right-hand side of \eqref{eq:hyperplane-mixed} is strictly positive, and the
cycle on that side is nonzero.  Therefore
\[
  \int_Y\alpha^k\wedge\omega^{p-k}>0
  \qquad (1\leq k\leq p).
\]
Corollary~3.8 shows that $\alpha$ is weakly K\"ahler.  The reverse inclusion
is immediate, and hence
\[
  \widetilde{\mathcal K}=\widetilde{\mathcal P}.
\]
If in addition $\alpha\in H^{1,1}_{\mathrm{BC}}(X,\mathbb R)$, then
$\alpha$ has smooth local $dd^c$-potentials and $\omega$ is K\"ahler, so the
second assertion of Corollary \ref{prop:mixed} gives $\alpha\in\mathcal K$.  Thus
\[
  \mathcal K=\mathcal P,
\]
which also proves assertion~\textup{(i)}.

It remains to prove assertion~\textup{(ii)}.  If $\alpha$ is nef, the
necessity part of Corollary \ref{cor:03}, with $k=p$, gives
\[
  \int_Y\alpha^p\geq0
\]
for every positive-dimensional irreducible analytic subset $Y\subset X$.
Conversely, suppose that all these top self-intersections are nonnegative.
Formula \eqref{eq:hyperplane-mixed} then gives
\[
  \int_Y\alpha^k\wedge\omega^{p-k}\geq0
  \qquad (1\leq k\leq p)
\]
for every irreducible $p$-dimensional $Y\subset X$.  Corollary \ref{cor:03} implies
that $\alpha$ is nef, as desired.    
\end{proof}


\section{Non--K\"ahler loci} \label{sec-nonkahlerlocus}

Throughout this section, $X$ is a compact reduced K\"ahler space of pure
dimension and every class $\alpha$ under consideration belongs to the usual
Bott--Chern group
\[
    H_{\mathrm{BC}}^{1,1}(X,\mathbb R).
\]
The extended Bott--Chern group is not used in this section.  Additional
smoothness or normality assumptions on $X$ will be stated explicitly.
For a closed positive $(1,1)$-current $T$ on a reduced pure-dimensional
complex space, the Lelong number $\nu(T,x)$ is defined as follows.  If
$\rho:U\hookrightarrow W$ is a local embedding of an open neighborhood $U$
of $x$, then
\[
    \nu(T,x):=\nu(\rho_*T,\rho(x)).
\]
This definition is independent of the chosen local embedding.  The sets
$E_c(T)$ below are analytic by Siu's theorem, applied after pushforward in a
local embedding.
Put
\[
 E_c(T):=\{x\in X:\nu(T,x)\geq c\},
 \qquad c>0,
\]
and
\[
 \Eplus(T)
 :=\bigcup_{c>0}E_c(T)
 =\{x\in X:\nu(T,x)>0\}.
\]
The convention $E_c(T)=\{\nu(T,\cdot)>c\}$ used in
\cite[Definition~3.1]{DH24} gives the same union $\Eplus(T)$ when $c$ ranges
over all positive numbers.

\begin{definition}[Non--K\"ahler loci]\label{def:nk}
Assume that $\alpha\in H_{\mathrm{BC}}^{1,1}(X,\mathbb R)$ is big.
Following \cite{DH24,HPaun24}, define
the \emph{non--K\"ahler locus}
\[
 \EnK(\alpha)
 :=
 \bigcap_{\substack{T\in\alpha\\ T\text{ a K\"ahler current}\\
                     T\text{ has local potentials}}}
 \Eplus(T).
\]
\end{definition}
In the smooth setting, this definition is due to Boucksom \cite{Bou04}.  The
local-potential requirement in the normal analytic-space formulation is
explicit in \cite[Definitions~2.2 and~3.1]{DH24}. Let \(T=\theta+dd^c\varphi\in\alpha\).  Following \cite[Definition~4.11]{HPaun24} (also \cite[Definition~3.3]{DH24} and \cite[Definition~2.9]{HLR26}), we  say that \(T\) has \emph{weak
analytic singularities} if there exist a proper holomorphic map
\[
\pi:\widehat X\longrightarrow X
\]
from a normal complex space and a closed positive current
\[
\widehat T=\pi^*\theta+dd^c\psi\in\pi^*\alpha
\]
such that $\pi$ is bimeromorphic,  \(\psi\) has admissible singularities and
\(\pi_*\widehat T=T\).

\begin{definition}[Restricted non--K\"ahler locus]\label{def:restricted-nk}
Following \cite[Definition~4.18]{HPaun24}, assume that
$\alpha\in H_{\mathrm{BC}}^{1,1}(X,\mathbb R)$ is nef and big.  Its
\emph{restricted non--K\"ahler locus} is
\[
 \EnKas(\alpha)
 :=
 \bigcap_{\substack{T\in\alpha\\
                     T\text{ a K\"ahler current}\\
                     T\text{ has weak analytic singularities}}}
 \Eplus(T).
\]
\end{definition}

We recall the following observation.

\begin{proposition}\label{prop:comparison}
Let $X$ be a normal compact K\"ahler space and let
$\alpha\in H_{\mathrm{BC}}^{1,1}(X,\mathbb R)$ be nef and big.
Then
\[
 \EnK(\alpha)
 \subseteq
 \EnKas(\alpha)
 \subseteq
 \EnK(\alpha)\cup\Sing(X).
\]
Moreover, there exists a K\"ahler current $T\in\alpha$ with weak analytic
singularities such that
\[
 \Eplus(T)=\EnKas(\alpha).
\]
If $X$ is smooth, then
\[
 \EnKas(\alpha)=\EnK(\alpha).
\]
\end{proposition}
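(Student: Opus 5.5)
The plan is to prove the four assertions of Proposition~\ref{prop:comparison} in the order (a) the first inclusion, (b) the existence of a minimizing current $T_{\min}$, (c) the second inclusion, and (d) the smooth case.

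\textbf{The first inclusion.} Let $T=\theta+\ddc\varphi\in\alpha$ be a K\"ahler current with weak analytic singularities, so $T=\pi_*\widehat T$ with $\widehat T=\pi^*\theta+\ddc\psi$ and $\psi$ admissible. We must show that $T$ has local potentials, so that it enters the intersection defining $\EnK(\alpha)$.
\begin{itemize}
\item Since $\alpha\in H_{\mathrm{BC}}^{1,1}(X,\mathbb R)$, locally $\theta=\ddc g$ with $g$ smooth.
\item Then $\widehat T=\ddc(g\circ\pi+\psi)$ locally on $\widehat X$.
\item Because $X$ is normal and $\pi$ has connected fibres, $\psi$ is constant on the fibres of $\pi$ modulo the pluriharmonic part coming from $g$. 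Hence $\varphi=\pi_*\psi$ defines a quasi-psh function on $X$, using Grauert--Remmert extension of psh functions across the analytic set where $\pi$ fails to be an isomorphism.
\end{itemize}
Consequently $T=\ddc(g+\varphi)$ locally, so $\EnK(\alpha)\subseteq\EnKas(\alpha)$.

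\textbf{A minimizing current.} Each $\Eplus(T)$ with $T$ of weak analytic singularities is an analytic set: on $\widehat X$ it is the image under $\pi$ of the polar divisor-type locus of $\psi$, together with the part of the exceptional set where the Lelong numbers of $\pi_*\widehat T$ are positive. Sums of two such currents, taken on a common resolution dominating both modifications, again have weak analytic singularities, and
\[
\Eplus\bigl(\tfrac12(T_1+T_2)\bigr)=\Eplus(T_1)\cap\Eplus(T_2),
\]
because Lelong numbers are additive and nonnegative. By the Noetherian property of analytic sets on a compact space, a finite intersection already equals the full intersection, which produces $T_{\min}$ with $\Eplus(T_{\min})=\EnKas(\alpha)$.

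\textbf{The second inclusion.} Fix $x\notin\EnK(\alpha)\cup\Sing(X)$. Then some K\"ahler current $T\in\alpha$ with local potentials has $\nu(T,x)=0$.
\begin{itemize}
\item Take a resolution $\pi:\widehat X\to X$ that is an isomorphism near $x$.
\item Pull back: $\pi^*T$ lies in $\pi^*\alpha$ and dominates $\delta\pi^*\omega$.
\item Add $\epsilon$ times a $\pi$-ample correction to get a K\"ahler current on $\widehat X$, with a small loss in the class.
\item Apply Demailly regularization on the manifold $\widehat X$ to get analytic singularities with Lelong numbers at $\pi^{-1}(x)$ still $0$ (Demailly's approximation preserves zero Lelong number up to $\epsilon$, and the approximants are smooth where $\nu=0$).
\item Push forward to $X$.
\end{itemize}

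The main obstacle is that the modification introduces a class $\pi^*\alpha-\epsilon E$ rather than $\pi^*\alpha$, so the pushforward must be adjusted. I would handle this as follows. Since $\pi$ is an isomorphism near $x$ and $E$ is $\pi$-exceptional, $\pi_*$ of the regularized current lies in $\alpha$ modulo $\ddc$ of a function that is smooth near $x$. We therefore keep the current $\pi_*(\widehat T_\epsilon+\epsilon[E])$. This still has weak analytic singularities, since $[E]$ is divisorial and admissible, and it has zero Lelong number at $x$ because $x\notin\pi(E)$. Hence $x\notin\EnKas(\alpha)$, which proves the second inclusion.

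\textbf{The smooth case.} When $X$ is smooth, $\Sing(X)=\varnothing$, so the two inclusions give $\EnKas(\alpha)=\EnK(\alpha)$, which agrees with (\ref{eq-EasEnonkahlerlocus}).
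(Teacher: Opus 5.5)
Your steps (a), (c) and (d) are essentially sound. Step (a) is even easier than you make it. With the paper's conventions, a current in a Bott--Chern class is $\theta+\ddc\varphi$ with $\varphi$ quasi-psh on $X$, so it automatically has local potentials. The first inclusion is then just monotonicity of an intersection over a smaller family. In (c), twisting by $-\epsilon E$, regularizing on the resolution and adding back $\epsilon[E]$ before pushing forward is the right mechanism; it is what lies behind \cite[Remark~4.19]{HPaun24}, which the paper simply cites.

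The genuine gap is in step (b): the identity
\[
\Eplus\bigl(\tfrac12(T_1+T_2)\bigr)=\Eplus(T_1)\cap\Eplus(T_2)
\]
is false. Precisely because Lelong numbers are additive and nonnegative, $\nu\bigl(\tfrac12(T_1+T_2),x\bigr)=\tfrac12\nu(T_1,x)+\tfrac12\nu(T_2,x)$ is positive as soon as \emph{one} of the two terms is. So the left-hand side equals $\Eplus(T_1)\cup\Eplus(T_2)$. Averaging can only enlarge the positive Lelong locus. After your (correct) Noetherian reduction to finitely many $T_1,\dots,T_k$ with $\bigcap_i\Eplus(T_i)=\EnKas(\alpha)$, you therefore still have no single current realizing this intersection.

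The missing idea is to take the maximum of the potentials rather than their average, as in Boucksom's argument on manifolds. If $T_i=\theta+\ddc\varphi_i\geq\delta\omega$, then $T:=\theta+\ddc\max_i\varphi_i$ is again a K\"ahler current in $\alpha$: locally $\theta-\delta\omega=\ddc g$ with $g$ smooth, and $\max_i(g+\varphi_i)$ is psh. Since $\max_j\varphi_j\geq\varphi_i$, the Lelong numbers of $T$ are bounded by those of each $T_i$. Hence $\Eplus(T)\subseteq\bigcap_i\Eplus(T_i)=\EnKas(\alpha)$, and the reverse inclusion is automatic \emph{provided} $T$ belongs to the defining family.

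That proviso is the real work: you must show that $T$ still has weak analytic singularities. On normal $X$ each $\psi_i$ is constant on the connected fibres of $\pi_i$, so $\widehat T_i=\pi_i^*T_i$. On a normal modification dominating all the $\pi_i$, the functions $\varphi_i\circ\pi$ are then admissible. One needs their maximum to be admissible up to a bounded term, e.g.\ $\max(c\log|f|^2,c\log|g|^2)=c\log(|f|^2+|g|^2)+O(1)$ when the coefficients agree. This is what \cite[Corollary~4.20]{HPaun24} supplies, and it is all the paper invokes for this assertion.

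Relatedly, your justification that each $\Eplus(T)$ is analytic is too vague. The clean route is $\Eplus(T)=\pi\bigl(\Eplus(\pi^*T)\bigr)$, with $\pi^*T$ having admissible singularities.
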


\begin{proof}
See \cite[Remark~4.19 and Corollary~4.20]{HPaun24}.  In the smooth case,
Demailly's regularization \cite{Dem92} provides K\"ahler currents with
analytic singularities without changing the ordinary non--K\"ahler locus.
\end{proof}

\begin{definition}[Null locus]\label{def:null}
Let $X$ be a reduced compact K\"ahler space of pure dimension and let
$\alpha\in H_{\mathrm{BC}}^{1,1}(X,\mathbb R)$ be nef and big.  Set
\[
 \Null(\alpha)
 :=
 \bigcup_{\substack{V\subset X\text{ irreducible analytic}\\
                     \dim V>0,\;\int_V\alpha^{\dim V}=0}}
 V.
\]
For $k=\dim V$, the intersection number can be defined on a resolution
$\mu:\widetilde V\to V$ by
\[
 \int_V\alpha^k
 :=
 \int_{\widetilde V}(\mu^*\alpha)^k.
\]
Equivalently, for a smooth local-potential representative, one may integrate
over $V_{\mathrm{reg}}$.  See \cite[Definition~4.9]{HPaun24}.
\end{definition}

\begin{theorem}[Collins--Tosatti; Hacon--P\u{a}un]\label{thm:null}
Let $\alpha\in H_{\mathrm{BC}}^{1,1}(X,\mathbb R)$ be nef and big.
\begin{enumerate}[label=\textup{(\roman*)}]
\item If $X$ is a compact K\"ahler manifold, then
\[
 \EnK(\alpha)=\Null(\alpha).
\]
\item If $X$ is a normal compact K\"ahler space, then
\[
 \EnKas(\alpha)=\Null(\alpha).
\]
In particular, $\Null(\alpha)$ is analytic.
\end{enumerate}
\end{theorem}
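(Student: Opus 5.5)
The statement collects two results from the literature, so the plan is to reduce each part to the cited theorem and to the comparisons in Proposition~\ref{prop:comparison}, rather than to reprove them. For (i), with $X$ a compact K\"ahler manifold, Collins--Tosatti \cite{CT15} prove $\EnKas(\alpha)=\Null(\alpha)$. The smooth case of Proposition~\ref{prop:comparison}, which uses Demailly's regularization \cite{Dem92}, gives $\EnKas(\alpha)=\EnK(\alpha)$. Combining the two yields (i).

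\medskip
\noindent For orientation I would also recall the two inclusions behind \cite{CT15}.

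\emph{The inclusion $\Null(\alpha)\subset\EnK(\alpha)$.} Suppose $V$ is irreducible with $\int_V\alpha^{\dim V}=0$ but $V\not\subset \Eplus(T)$ for some K\"ahler current $T\in\alpha$. Then $T|_V$ is well defined, since its potential is not identically $-\infty$ on $V$. Its non-pluripolar part is a K\"ahler current on $V$, and nefness lets one compare its mass with the intersection number. This gives $\int_V\alpha^{\dim V}\ge\int_{V}\langle T|_V\rangle^{\dim V}>0$, a contradiction.

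\emph{The converse inclusion.} This is the hard direction. One shows by induction on dimension that for every irreducible $V\not\subset\Null(\alpha)$, the class $\alpha|_V$ is big, with a K\"ahler current having analytic singularities. The currents on the strata are then glued back to $X$ using the extension technique of Lemma~\ref{le-extension-DP}, now with singular currents and the convex-maximum trick of Lemma~\ref{le-noihaipsh}. The main obstacle is the extension of K\"ahler currents from subvarieties, i.e.\ the mass-concentration argument on blow-ups.

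\medskip
\noindent For (ii), the equality $\EnKas(\alpha)=\Null(\alpha)$ on a normal compact K\"ahler space is \cite[Theorem~4.21]{HPaun24}. The strategy there is to pass to a resolution $\mu\colon\widetilde X\to X$, which is K\"ahler by Lemma~\ref{le-blowup}. One applies (i) to a perturbation $\mu^*\alpha-\varepsilon E$ by an exceptional divisor, with $\varepsilon$ chosen so that the perturbed class stays big and controls the exceptional locus. The resulting currents with analytic singularities are then pushed forward; their pushforwards have weak analytic singularities by definition.

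\medskip
\noindent For the analyticity statement, I use the last part of Proposition~\ref{prop:comparison}. It provides a K\"ahler current $T\in\alpha$ with weak analytic singularities such that $\Eplus(T)=\EnKas(\alpha)$. Write $T=\pi_*\widehat T$ with $\widehat T$ having admissible singularities on $\widehat X$. Then $\Eplus(\widehat T)$ is an analytic subset of $\widehat X$, namely the singular locus of the potential $\psi$. Up to the exceptional set of $\pi$, whose image is also analytic, $\Eplus(T)$ is $\pi(\Eplus(\widehat T))$, and this set is analytic by Remmert's proper mapping theorem. Hence $\Null(\alpha)=\EnKas(\alpha)$ is analytic. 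The step I expect to need the most care is identifying $\Eplus(T)$ with this image, specifically checking that positive Lelong numbers of $\pi_*\widehat T$ occur exactly on $\pi$ of the polar locus together with the relevant exceptional images. This is the bookkeeping done in \cite[Remark~4.19, Corollary~4.20]{HPaun24}, which I would invoke.
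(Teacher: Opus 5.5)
Your proposal is correct and follows the paper, whose proof likewise just cites \cite[Theorem~1.1]{CT15} for (i) and \cite[Theorem~4.21]{HPaun24} for (ii). Routing (i) through $\EnKas$ and the smooth case of Proposition~\ref{prop:comparison} is harmless, and in the analyticity step normality closes the gap you flagged: connected fibres make the upstairs potential constant on fibres, so that, with a \L{}ojasiewicz estimate, $\Eplus(T)=\pi(\Eplus(\widehat T))$ holds exactly. Your non-load-bearing sketches do contain slips, though: $V\not\subset\Eplus(T)$ alone does not stop the potential of an arbitrary $T$ from being $\equiv-\infty$ on $V$ (one first passes to analytic singularities, as in Proposition~\ref{pro-NullnamtrongEnK}), and (i) cannot be applied to $\mu^*\alpha-\varepsilon E$, which is in general not nef.
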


\begin{proof}
Part (i) is \cite[Theorem~1.1]{CT15}.  Part (ii) is
\cite[Theorem~4.21]{HPaun24}; see also \cite[Theorem~3.6]{DH24}.
\end{proof}

\begin{lemma}[Normalization of the null locus]\label{lem:normalization-reduction}
Let $X$ be a reduced compact K\"ahler space of pure dimension, let
$\nu:X^\nu\to X$ be its normalization, and let
$\alpha\in H_{\mathrm{BC}}^{1,1}(X,\mathbb R)$ be nef and big.
Then $\nu^*\alpha$ is nef and big and
\begin{equation}\label{eq:null-under-normalization}
    \nu\bigl(\Null(\nu^*\alpha)\bigr)=\Null(\alpha).
\end{equation}
In particular, $\Null(\alpha)$ is analytic.
\end{lemma}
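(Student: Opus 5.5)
The plan has three parts: show that $\nu^*\alpha$ is nef and big, compare intersection numbers on subvarieties of $X$ and $X^\nu$ via the projection formula for the finite bimeromorphic map $\nu$, and then deduce analyticity from Theorem~\ref{thm:null}(ii).

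\emph{Nef and big.} Fix a smooth representative $\theta$ of $\alpha$ with local potentials, and a K\"ahler form $\omega$ on $X$. For nefness, take $f_\varepsilon$ with $\theta+\ddc f_\varepsilon\ge-\varepsilon\omega$. Then
\[
\nu^*\theta+\ddc(f_\varepsilon\circ\nu)\ge-\varepsilon\nu^*\omega .
\]
Since $\nu$ is finite, $\nu^*\omega$ is again a K\"ahler form on $X^\nu$: locally $X^\nu$ embeds into $X\times\C^M$ over $X$, and Lemma~\ref{le-kählerkophuthuocvaolocal} lets us add $\ddc|z|^2$ in the fibre coordinates. Thus $\nu^*\alpha$ is nef with respect to a K\"ahler form on $X^\nu$. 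For bigness, take a K\"ahler current $T=\theta+\ddc\varphi\ge\delta\omega$ with local potentials. Its pullback $\nu^*T=\nu^*\theta+\ddc(\varphi\circ\nu)$ is well defined because potentials pull back as psh functions, and it satisfies $\nu^*T\ge\delta\nu^*\omega$. Hence $\nu^*\alpha$ is big.

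\emph{Intersection numbers.} The key step is the following comparison. Let $W\subset X^\nu$ be irreducible of dimension $k>0$, and put $V=\nu(W)$, which is analytic and irreducible by Remmert's theorem, with $\dim V=k$ since $\nu$ is finite. Then $\nu|_W\colon W\to V$ is finite and surjective, of some degree $d\ge1$. Choose resolutions $\widetilde W\to W$ and $\widetilde V\to V$ so that $\widetilde W\to\widetilde V$ is generically finite of degree $d$ (resolve the graph if necessary). The projection formula then gives
\[
\int_W(\nu^*\alpha)^k=d\int_V\alpha^k .
\]
Conversely, every irreducible $V\subset X$ of positive dimension is the image of some irreducible component $W$ of $\nu^{-1}(V)$ that dominates $V$; such a component exists because $\nu$ is surjective and finite. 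Consequently $\int_V\alpha^k=0$ if and only if $\int_W(\nu^*\alpha)^k=0$, for any $W$ dominating $V$. The inclusion $\nu(\Null(\nu^*\alpha))\subseteq\Null(\alpha)$ is then immediate. For the reverse inclusion, given $V\subseteq\Null(\alpha)$, pick a dominating component $W$: it lies in $\Null(\nu^*\alpha)$ and satisfies $\nu(W)=V$. This proves \eqref{eq:null-under-normalization}.

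\emph{Analyticity.} By Theorem~\ref{thm:null}(ii) applied on the normal compact K\"ahler space $X^\nu$, which is K\"ahler by the argument above, $\Null(\nu^*\alpha)$ is analytic. Its image under the proper map $\nu$ is therefore analytic by Remmert's proper mapping theorem.

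The main obstacle is the degree comparison of the top self-intersections. Making it rigorous requires the projection formula for generically finite maps between resolutions, together with independence of the chosen resolution. I would check this by pulling back smooth local-potential representatives and integrating over regular loci: for a generically finite map of degree $d$, integrals of pulled-back top forms multiply by $d$, and the exceptional sets have measure zero.
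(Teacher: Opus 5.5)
\textbf{The gap is in the positivity step, not in the intersection numbers.} Your handling of the intersection numbers is the paper's: the projection formula $\int_W(\nu^*\alpha)^k=d_W\int_V\alpha^k$, a dominating component of $\nu^{-1}(V)$, then Remmert. That is the routine part, not the main obstacle.

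The false claim is that, because $\nu$ is finite, $\nu^*\omega$ is again a K\"ahler form on $X^\nu$. Take $X=C\subset\mathbb{P}^2$ the cuspidal cubic, with normalization $\nu\colon\mathbb{P}^1\to C$, locally $t\mapsto(t^2,t^3)$. Near $t=0$, $\nu^*\omega_{FS}$ is comparable to $|t|^2\sqrt{-1}\,dt\wedge d\bar t$, so it vanishes at $t=0$. Since $X^\nu=\mathbb{P}^1$ is smooth, no local embedding can exhibit it as the restriction of a strictly positive form. In general $\nu^*\omega$ degenerates wherever $\nu$ fails to be an immersion.

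Adding $\ddc|z|^2$ in fibre coordinates produces a \emph{different} form, and only locally; it does not make $\nu^*\omega$ Hermitian. Three things therefore fail:
\begin{enumerate}
\item $\nu^*T\geq\delta\nu^*\omega$ does not make $\nu^*T$ a K\"ahler current, so bigness of $\nu^*\alpha$ is not established.
\item The K\"ahlerness of $X^\nu$, which you need to invoke Theorem~\ref{thm:null}(ii), rested on the same false claim.
\item Nefness on $X^\nu$ needs a genuine K\"ahler form $\omega'$ with $\nu^*\omega\leq C\omega'$, so it also depends on this.
\end{enumerate}

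\textbf{How to close it.} You must show that the \emph{class} $\nu^*\{\omega\}$ is big (or K\"ahler), not the form. Two routes work.
\begin{enumerate}
\item Globalize your fibre-coordinate idea. Cover $X$ by finitely many $U_i$ over which $(\nu,z^{(i)})$ embeds $\nu^{-1}(U_i)$ into $U_i\times\C^{M_i}$, and take a partition of unity $(\chi_i)$ on $X$ subordinate to $(U_i)$. Set $g:=\sum_i(\chi_i\circ\nu)^2|z^{(i)}|^2$. Absorbing the cross terms $\chi_i\,d\chi_i\wedge d^c|z^{(i)}|^2$ by Cauchy--Schwarz, check that $\omega':=C\nu^*\omega+\ddc g$ is a K\"ahler form for $C\gg1$. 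Then $X^\nu$ is K\"ahler, and $\nu^*T+\tfrac{\delta}{C}\ddc g\geq\tfrac{\delta}{C}\omega'$ is a K\"ahler current in $\nu^*\alpha$.
\item Follow the paper. Get K\"ahlerness of $X^\nu$ from the relative Fubini--Study argument of Lemma~\ref{le-blowup}, since $\nu$ is finite, hence projective. Prove bigness indirectly on a resolution $f=\nu\circ\mu\colon Y\to X$: Theorem~\ref{thm:current} gives a K\"ahler current $K_\omega\in f^*\{\omega\}$, so $f^*S-\delta f^*\omega+\delta K_\omega$ is a K\"ahler current in $f^*\alpha$. Hence $\int_Y(f^*\alpha)^n>0$, so $\int_{X^\nu}(\nu^*\alpha)^n>0$. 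The bigness criterion for nef classes on normal spaces \cite[Lemma~2.35]{DHP24} then shows $\nu^*\alpha$ is big.
\end{enumerate}
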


\begin{proof}
The argument is componentwise, so we may work on one irreducible component.
The normalization is a finite projective modification of $X$; the relative
Fubini--Study argument of Lemma~\ref{le-blowup} therefore shows that $X^\nu$
is a compact K\"ahler space.  The class $\nu^*\alpha$ is nef: pull back the
smooth nef approximants on $X$ and compare $\nu^*\omega$ with a K\"ahler
form on $X^\nu$.

We verify bigness without pushing a current through a map with
positive-dimensional fibers.  Choose $S\in\alpha$ with
$S\geq\delta\omega$, take a projective resolution
$\mu:Y\to X^\nu$, and put $f:=\nu\circ\mu$.  Since $\alpha$ is an ordinary
Bott--Chern class, the pullback
\[
    f^*S=f^*\theta+\ddc(\varphi\circ f),
    \qquad S=\theta+\ddc\varphi,
\]
is well defined, and the local-potential pullback preserves
$f^*S\geq\delta f^*\omega$.  The class $f^*\{\omega\}$ is nef and has positive top
self-intersection by the projection formula.  Theorem~\ref{thm:current}
gives a K\"ahler current $K_\omega\in f^*\{\omega\}$.  Consequently,
\[
    f^*S-\delta f^*\omega+\delta K_\omega
\]
is a K\"ahler current in $f^*\alpha$.  Thus the nef class $f^*\alpha$ is
big.  The standard identity between the volume and the top
self-intersection of a nef class \cite{Bou04,BEGZ10} therefore gives
$\int_Y(f^*\alpha)^n>0$.  The projection formula
gives
\[
   \int_{X^\nu}(\nu^*\alpha)^n
     =\int_Y(f^*\alpha)^n>0,
   \qquad n=\dim X.
\]
The normal-space bigness criterion for nef classes
\cite[Lemma~2.35]{DHP24} now shows that $\nu^*\alpha$ is big.  This argument
applies separately to every component of $X^\nu$.

By Theorem~\ref{thm:null}\textup{(ii)},
$\Null(\nu^*\alpha)$ is analytic on each component of $X^\nu$.
If $W\subset X^\nu$ is irreducible of dimension $k>0$, put
$V:=\nu(W)$ and let $d_W$ be the generic degree of $\nu|_W$.  The projection
formula gives
\[
    \int_W(\nu^*\alpha)^k=d_W\int_V\alpha^k.
\]
Since $\nu$ is finite, every positive-dimensional irreducible analytic subset
upstairs and downstairs occurs in this way.  The displayed projection
formula proves \eqref{eq:null-under-normalization}.  Remmert's proper mapping
theorem then shows that $\Null(\alpha)$ is analytic.
\end{proof}

\begin{proposition} \label{pro-NullnamtrongEnK} Let $X$ be a compact Kähler space of pure dimension. Let $\alpha$ be a big and nef Bott-Chern class on $X$. Then  we have
\[
    \Null(\alpha)\subseteq\EnK(\alpha).
\]
\end{proposition}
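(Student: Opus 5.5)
We argue by contraposition, following the smooth case in \cite{CT15}. Let $V\subset X$ be an irreducible analytic subset with $k:=\dim V>0$ and $V\not\subset\EnK(\alpha)$. We will show $\int_V\alpha^k>0$, so that $V\not\subset\Null(\alpha)$. Since $\Null(\alpha)$ is the union of such $V$, this gives the inclusion.

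By definition of $\EnK(\alpha)$, there is a K\"ahler current $T=\theta+\ddc\varphi\in\alpha$ with local potentials, $T\geq\delta\omega$, and a point $x\in V$ with $\nu(T,x)=0$. Since $E_+(T)=\bigcup_{c>0}E_c(T)$ is a countable union of analytic sets (by Siu's theorem in local embeddings), none of which contains $V$, the set $V\setminus E_+(T)$ is dense in $V$. Hence $T|_V$ is well defined: the potential $\varphi|_V$ is not identically $-\infty$ on $V$, and in fact has zero Lelong numbers at generic points of $V$.

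Take a resolution $\mu:\widetilde V\to V$ and put $f:=\iota\circ\mu$, where $\iota:V\hookrightarrow X$ is the inclusion. Because $T$ has local potentials and $\varphi\circ f\not\equiv-\infty$, the pullback
\[
f^*T=f^*\theta+\ddc(\varphi\circ f)
\]
is a closed positive current in $f^*\alpha$, and $f^*T\geq\delta f^*\omega$. As in the proof of Lemma~\ref{lem:normalization-reduction}, the class $f^*\{\omega\}$ is nef with $\int_{\widetilde V}(f^*\omega)^k=\int_V\omega^k>0$. Here $\widetilde V$ is K\"ahler by Lemma~\ref{le-blowup}. Theorem~\ref{thm:current} then gives a K\"ahler current $K\in f^*\{\omega\}$, so
\[
f^*T-\delta f^*\omega+\delta K
\]
is a K\"ahler current in $f^*\alpha$. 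Thus $f^*\alpha$ is nef and big on the compact K\"ahler manifold $\widetilde V$. Using the equality between the volume and the top self-intersection of a nef class \cite{Bou04,BEGZ10}, we get $\int_{\widetilde V}(f^*\alpha)^k>0$. By the definition of intersection numbers via resolutions in Definition~\ref{def:null}, this is $\int_V\alpha^k>0$.

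The main obstacle is the justification that $f^*T$ is a genuine current, i.e.\ that $\varphi|_V\not\equiv-\infty$, in the singular setting. The argument above reduces this to the countable-union statement for $E_+(T)$ together with the local-potential assumption. One point must be checked carefully: Lelong number zero at a point of $V$ in an ambient embedding forces the potential to be finite somewhere on $V$. For this we would use that $\varphi$ is the restriction of an ambient psh function, and that the set where a psh function equals $-\infty$ has positive Lelong number at each of its points only when it contains an analytic set. If this is delicate, we would instead use Siu's decomposition on a component: if $\varphi\equiv-\infty$ on $V$, then $V$ lies in the polar set, and we would argue that every point of $V$ has positive Lelong number. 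That would contradict the choice of $x$.
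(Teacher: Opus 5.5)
\textbf{There is a genuine gap, and it is the step you flagged.} The implication ``$\nu(T,x)=0$ for generic $x\in V$ $\Rightarrow$ $\varphi|_V\not\equiv-\infty$'' is false, already on manifolds. The function $u(z_1,z_2)=-\sqrt{-\log|z_1|}$ is psh near $0\in\mathbb C^2$ and has zero Lelong number at every point. Nevertheless $u\equiv-\infty$ on $V=\{z_1=0\}$.

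The same thing happens globally on a compact K\"ahler manifold. Take $v$ from Lemma~\ref{le-Demailly-exten.lemma} for $V$, normalized so that $v\le -1$ and $\ddc v\geq -A\omega$. Then $\psi:=-\sqrt{-v}$ satisfies $\ddc\psi\geq -\frac{A}{2}\omega$. Hence $\omega+\ddc(\varepsilon\psi)$ is a K\"ahler current with local potentials and $\Eplus=\varnothing$, yet its potential is $-\infty$ on all of $V$.

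Such currents are admissible in Definition~\ref{def:nk}. So the current $T$ you pick may have no restriction to $V$, and then $f^*T$ is undefined. Your fallback via Siu's decomposition fails for the same reason: the residual part has no positive Lelong numbers, but its polar set may still contain $V$.

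\textbf{What is missing} is a regularization step. You need to replace $T$ by a current with analytic singularities whose Lelong numbers are bounded by those of $T$. For such a current the polar set equals the set of positive Lelong numbers, so it misses the generic point of $V$. This is what \cite{CT15} does via \cite{Dem92} in the smooth case. On a singular $X$ this regularization is not directly available; it is closely tied to the equality $\EnK=\EnKas$ that this proposition is used to prove.

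\textbf{How the paper gets around this.}
\begin{enumerate}
\item It pulls $R$ back to a resolution $\pi\colon X'\to X$.
\item It shows that the generic Lelong number of $R'$ along a component $F$ of $\pi^{-1}(V)$ dominating $V$ vanishes. This uses a finite local projection, Demailly's direct-image estimate, and Favre's comparison of Lelong numbers under pullback.
\item It regularizes $S=R'-\delta\pi^*\omega$ on $X'$.
\end{enumerate}
The price is a loss $\varepsilon_j\Omega$: the regularized currents lie only in $\pi^*\alpha+\varepsilon_j\{\Omega\}$, not in $\pi^*\alpha$. So your one-shot conclusion ``$f^*\alpha$ is big, hence $\int(f^*\alpha)^k>0$'' has to be replaced by an estimate uniform in $j$. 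The non-pluripolar masses on $\widehat F$ are bounded below by $\delta^k c\int_V\omega^k>0$. They are bounded above by $\int_F(\pi^*\alpha+\varepsilon_j\Omega)^k\wedge\Omega^r$, which tends to $c\int_V\alpha^k=0$.

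The second half of your argument (bigness of $f^*\alpha$ on $\widetilde V$, hence positive top self-intersection) is correct once a well-defined pullback exists. The gap is that producing such a pullback is the real content of the proof.
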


\begin{proof}
Let $V$ be an irreducible analytic subset occurring in the definition of
$\Null(\alpha)$, and put $k:=\dim V>0$.  Thus
\begin{equation}\label{eq:null-V}
    \int_V\alpha^k=0.
\end{equation}
Suppose, for a contradiction, that
$V\not\subseteq\EnK(\alpha)$.  By the definition of
$\EnK(\alpha)$ in Definition~\ref{def:nk}, there is a K\"ahler current
$R\in\alpha$ such that
\[
    V\not\subseteq\Eplus(R).
\]
Since $\alpha$ is an ordinary Bott--Chern class, $R$ has local psh
potentials.
Fix a K\"ahler form $\omega$ on $X$.  Since $R$ is a K\"ahler current, after
decreasing the constant if necessary there is a number $\delta>0$ such that
\begin{equation}\label{eq:kahler-current-lower-bound}
    R\geq\delta\omega.
\end{equation}

Choose a strong projective desingularization
\(
    \pi\colon X'\to X
\)
and a K\"ahler form $\Omega$ on $X'$; if $X$ is reducible, we work on a
component of $X'$ whose image contains $V$.  Let $F$ be an irreducible
component of $\pi^{-1}(V)$ which dominates $V$, and put
\[
    r:=\dim F-k.
\]
As in the proof of Lemma \ref{le-Smsoothpositiveascurrents}, we get
\begin{equation}\label{eq:push-F-to-V}
    \pi_*\bigl([F]\wedge\Omega^r\bigr)=c[V]
\end{equation}
for some $c>0$.  Indeed, $c$ is the integral of $\Omega^r$ on a generic
fiber of $F\to V$.  Notice that only the positivity of $c$, and no estimate
uniform in $V$, is needed below.

We first pull $R$ back to $X'$.  Write
\[
    R=\theta+\ddc u,
\]
where $\theta$ is a smooth local-potential representative of $\alpha$ and
$u$ is quasi-plurisubharmonic on $X$, and set
\begin{equation}\label{eq:def-resolution-pullback-R}
    R':=\pi^*\theta+\ddc(u\circ\pi).
\end{equation}
This is a well-defined closed $(1,1)$-current in the class $\pi^*\alpha$.
Since $R-\delta\omega$ is a positive current with local psh potentials, its
pullback by $\pi$ is well defined and positive.  Therefore
\begin{equation}\label{eq:resolution-lower-bound}
    R'\geq\delta\pi^*\omega.
\end{equation}

We next verify the point in the argument for which the hypothesis
$V\not\subset\Eplus(R)$ is essential:
\begin{equation}\label{eq:generic-lelong-pullback-zero}
    \nu(R',F)=0.
\end{equation}
Here and below $\nu(\,\cdot\,,F)$ denotes the generic Lelong number along
$F$.  Choose a very general point $x\in V$ such that
$\nu(R,x)=0$, and a general $y\in F$ with $\pi(y)=x$.  Locally near $x$
write $\theta=\ddc h$ and put $\varphi:=u+h$, so that
$R=\ddc\varphi$ as a current; after adding a constant, assume
$\varphi\leq0$.  Choose a finite local projection
\[
    q\colon U\longrightarrow B\subset\mathbb C^n,
    \qquad q^{-1}(q(x))=\{x\}.
\]
The upper-semicontinuous representative of the trace $v:=q_*\varphi$ is a
psh potential of the positive current $q_*R$ on $B$; indeed,
$\ddc v=q_*R\geq0$.  Since $\varphi\leq0$ and the trace is the sum over the
finite fiber, counting multiplicities, one has
\begin{equation}\label{eq:trace-potential-comparison}
    \varphi\geq v\circ q
\end{equation}
on $U$ (the inequality first holds off the discriminant and then extends
by upper semicontinuity).  Demailly's direct-image estimate
\cite[Theorem~6]{Dem82Direct} yields
\[
    \nu(q_*R,q(x))=0.
\]
The map $q\circ\pi$ has maximal generic rank.  By \cite[Theorem~2 and Corollary~4]{Fav99} (or \cite{Kis00}), together with
\eqref{eq:trace-potential-comparison}, gives
\[
\begin{split}
    0\leq\nu(R',y)
      &=\nu(\varphi\circ\pi,y)\\
      &\leq\nu\bigl(v\circ q\circ\pi,y\bigr)
       \leq C_y\nu(q_*R,q(x))=0.
\end{split}
\]
This proves \eqref{eq:generic-lelong-pullback-zero}.  The same argument is
applied branch by branch at a reducible point.

Set
\[
    S:=R'-\delta\pi^*\omega\geq0.
\]
Subtracting the smooth form $\delta\pi^*\omega$ does not change Lelong
numbers, so \eqref{eq:generic-lelong-pullback-zero} gives $\nu(S,F)=0$.
By Demailly's regularization theorem \cite{Dem92}, there are currents
$S_j\in\{S\}$ with analytic singularities and numbers
$\varepsilon_j\downarrow0$ such that
\begin{equation}\label{eq:regularized-S}
    S_j\geq-\varepsilon_j\Omega,
    \qquad
    \nu(S_j,z)\leq\nu(S,z)\quad(z\in X').
\end{equation}
Define
\[
    T_j:=S_j+\delta\pi^*\omega+\varepsilon_j\Omega.
\]
Thus $T_j$ is a closed positive current with analytic singularities,
\begin{equation}\label{eq:Tj-class-and-bound}
    T_j\geq\delta\pi^*\omega,
    \qquad
    \{T_j\}=\pi^*\alpha+\varepsilon_j\{\Omega\}.
\end{equation}
Equations \eqref{eq:generic-lelong-pullback-zero} and
\eqref{eq:regularized-S} show that the singular set of $T_j$ does not
contain $F$.

Let $\mu\colon\widehat F\to F$ be a projective resolution, let
$g\colon\widehat F\to X'$ be the induced map, and set
$f:=\pi\circ g$.  Since $F$ is not contained in the singular set of
$T_j$, the pullback
\[
    \widehat T_j:=g^*T_j
\]
is well defined.  It is a closed positive current with analytic
singularities on the compact K\"ahler manifold $\widehat F$.  Moreover,
by \eqref{eq:Tj-class-and-bound},
\begin{equation}\label{eq:hat-Tj}
    \widehat T_j\geq\delta f^*\omega,
    \qquad
    \{\widehat T_j\}
      =f^*\alpha+\varepsilon_j g^*\{\Omega\}.
\end{equation}

We now apply the non-pluripolar mass argument from
\cite[Theorem~2.5]{CT15}.  Multilinearity and positivity of
non-pluripolar products \cite[Proposition~1.4]{BEGZ10}, together with
\eqref{eq:hat-Tj}, give
\begin{align}
 \int_{\widehat F}
    \big\langle\widehat T_j^k\big\rangle\wedge(g^*\Omega)^r
 &\geq
   \delta^k\int_{\widehat F}(f^*\omega)^k\wedge(g^*\Omega)^r \notag\\
 &=\delta^k c\int_V\omega^k>0.                 \label{eq:uniform-lower-NP}
\end{align}
The lower bound is independent of $j$; the second equality follows from
\eqref{eq:push-F-to-V}.

On the other hand, put
\[
    \beta_j:=f^*\alpha+\varepsilon_jg^*\{\Omega\}.
\]
This class is nef.  The cohomological upper bound for non-pluripolar
products \cite[Proposition~1.20]{BEGZ10} bounds the class of
$\langle\widehat T_j^k\rangle$ by the positive product
$\langle\beta_j^k\rangle$.  Here $g^*\{\Omega\}$ is nef and big, since
\[
  \int_{\widehat F}(g^*\Omega)^{k+r}=\int_F\Omega^{k+r}>0,
\]
and hence $\beta_j$ is nef and big; because $\beta_j$ is nef, its positive product is the
ordinary cup product $\beta_j^k$.  Pairing the resulting pseudoeffective
class inequality with the nef class $(g^*\{\Omega\})^r$ gives the required
upper bound.  More generally, the monotonicity for non-pluripolar products (see \cite{DDL18,WN19}, also
\cite{Vu21Relative}) gives
\begin{align}
 \int_{\widehat F}
    \big\langle\widehat T_j^k\big\rangle\wedge(g^*\Omega)^r
 &\leq
 \int_{\widehat F}
   \bigl(f^*\alpha+\varepsilon_jg^*\Omega\bigr)^k
       \wedge(g^*\Omega)^r                                      \notag\\
 &=\int_F
   \bigl(\pi^*\alpha+\varepsilon_j\Omega\bigr)^k\wedge\Omega^r.
                                                               \label{eq:upper-NP}
\end{align}
The last expression is a polynomial in $\varepsilon_j$.  Its constant
term is, by \eqref{eq:push-F-to-V} and \eqref{eq:null-V},
\[
    \int_F(\pi^*\alpha)^k\wedge\Omega^r
      =c\int_V\alpha^k=0.
\]
Consequently, the right-hand side of \eqref{eq:upper-NP} tends to zero as
$j\to\infty$, contradicting the uniform positive lower bound
\eqref{eq:uniform-lower-NP}.  This contradiction proves
$V\subseteq\EnK(\alpha)$, and hence
$\Null(\alpha)\subseteq\EnK(\alpha)$.  
\end{proof}

\begin{proof}[Proof of Theorem~\ref{thm:restricted-non-kahler-intro}]
We already know that 
\[
    \EnK(\alpha)
      \subset\EnKas(\alpha),
\]
Proposition \ref{pro-NullnamtrongEnK} tells us that $\Null(\alpha)\subset \EnK(\alpha)$, whereas by the normality of $X$ and Theorem \ref{thm:null}, we get 
\[
    \EnKas(\alpha)=\Null(\alpha).
\]
The desired equalities thus follow. 
\end{proof}

\bibliographystyle{alpha}
\bibliography{DP-refer-citations-revised}

\end{document}